\numberwithin{equation}{section}
\newtheorem{thm}{Theorem}[section]
\newtheorem{lem}[thm]{Lemma}
\theoremstyle{definition}
\newtheorem{example}[thm]{Example}
\newtheorem{rem}{Remark}[section]
\begin{document}
\baselineskip 18truept
\title[Some New  Retarded Nonlinear Integral Inequalities]{New Retarded Nonlinear Integral Inequalities of the Gronwall-Bellman-Pachpatte Type and Their Applications}
\date{}
\author{Nagesh Kale}

\address{Nagesh Kale, Department of Mathematics,Sant Rawool Maharaj Mahavidyalaya Kudal,
Sindhudurga-416520, India.(M.S.)} \email{nageshkale7991@gmail.com}

\subjclass[2020]{39B72, 26D10, 34A34}

\maketitle

\begin{abstract}
The goal of the present article is to offer a number of new retarded nonlinear inequalities of Gronwall, Bellman and Pachpatte kind for a class of integral and integro-differential equations. These inequalities generalize and provide new formulations of some well-known results in the mathematical framework of integral and differential inequalities that have been derived currently as well as in earlier times. These results can be utilized to investigate diverse aspects, both qualitative and quantitative, of a class of aforementioned equations. We propose a few applications to ensure effectiveness of these inequalities.
\end{abstract}

\noindent{ \small \textbf{Keywords}:   integral inequality, retarded integral equations, Gronwall-Bellman-Pachpatte type.
\section{\textbf{Introduction}}
\par In the realm of contemporary advances in several disciplines of mathematics, the quest of integral equations, differential equations, and integro-differential equations has an essential role due to its widespread recognition as a leading instrument of applied research. Evidently, the study of a number of qualitative and quantitative properties of these classes of equations has focused heavily on inequality technique.The extensive literature illuminating the these tools and their evolution can be unveiled form \cite{Pachpatte1998, Bainov1992, Pachpatte2006}, and references specified therein. 
\par In 1919, while researching the dependent nature of systems of differential equations with relative parameters, Gronwall devised the widely recognized integral inequality \cite{Gronwall1919}, which states that 
\begin{thm}[Gronwall \cite{Gronwall1919}]\label{i4} If
\begin{align*}
	0\leq x(\delta)\leq \int\limits_{\mathfrak{c}}^{\delta} \Big(\mathfrak{h}_1x(\mathfrak{\tilde{\delta}})+\mathfrak{h}_2\Big) d\mathfrak{\tilde{\delta}},~~\text{for}~ \delta\in [\mathfrak{c}, \mathfrak{c}+\mathfrak{h}], \mathfrak{h}_1, \mathfrak{h}_2\in\mathbb{R}_+, 
\end{align*} 
for continuous function $x(\delta)$ on $[\mathfrak{c}, \mathfrak{c}+\mathfrak{h}]$ then
\begin{align*}
	0\leq x(\delta)\leq \mathfrak{h}_2\mathfrak{h}e^{\mathfrak{h}_1\mathfrak{h}},~~ \text{for}~\delta\in [\mathfrak{c}, \mathfrak{c}+\mathfrak{h}].
\end{align*}
\end{thm}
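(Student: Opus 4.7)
The plan is to convert the integral inequality into a linear first-order differential inequality via an auxiliary function, and then apply the integrating-factor technique. First, I would set
\[
u(\delta) \;:=\; \int_{\mathfrak{c}}^{\delta}\bigl(\mathfrak{h}_1 x(\mathfrak{\tilde{\delta}}) + \mathfrak{h}_2\bigr)\, d\mathfrak{\tilde{\delta}},
\]
so that $u(\mathfrak{c}) = 0$, $u$ is continuously differentiable on $[\mathfrak{c},\mathfrak{c}+\mathfrak{h}]$ by continuity of $x$, and $0 \leq x(\delta) \leq u(\delta)$ holds directly from the hypothesis. Differentiating and inserting $x \leq u$ yields the linear differential inequality $u'(\delta) \leq \mathfrak{h}_1 u(\delta) + \mathfrak{h}_2$.

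Next I would multiply through by the integrating factor $e^{-\mathfrak{h}_1(\delta-\mathfrak{c})}$, which recasts the left-hand side as an exact derivative:
\[
\frac{d}{d\delta}\bigl[e^{-\mathfrak{h}_1(\delta-\mathfrak{c})}\, u(\delta)\bigr] \;\leq\; \mathfrak{h}_2\, e^{-\mathfrak{h}_1(\delta-\mathfrak{c})}.
\]
Integrating from $\mathfrak{c}$ to $\delta$ and using $u(\mathfrak{c})=0$ produces the sharp bound $u(\delta) \leq \tfrac{\mathfrak{h}_2}{\mathfrak{h}_1}\bigl(e^{\mathfrak{h}_1(\delta-\mathfrak{c})}-1\bigr)$ when $\mathfrak{h}_1>0$, with the degenerate case $\mathfrak{h}_1=0$ giving $u(\delta) \leq \mathfrak{h}_2(\delta-\mathfrak{c})$ directly from $u' \leq \mathfrak{h}_2$.

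To match the closed form appearing in the statement, I would apply the elementary estimate $e^{y}-1 \leq y\, e^{y}$ for $y \geq 0$, upgrading the bound to
\[
u(\delta) \;\leq\; \mathfrak{h}_2(\delta-\mathfrak{c})\, e^{\mathfrak{h}_1(\delta-\mathfrak{c})} \;\leq\; \mathfrak{h}_2\,\mathfrak{h}\, e^{\mathfrak{h}_1 \mathfrak{h}}
\]
on $[\mathfrak{c},\mathfrak{c}+\mathfrak{h}]$. Since $x(\delta)\leq u(\delta)$ and $x(\delta)\geq 0$ by assumption, the conclusion follows.

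The only step requiring any real care is the last one: the integrating-factor method naturally delivers the tighter exponential $\tfrac{\mathfrak{h}_2}{\mathfrak{h}_1}(e^{\mathfrak{h}_1(\delta-\mathfrak{c})}-1)$, and one must route through $e^y-1 \leq y e^y$ to recover the looser, uniformly packaged form $\mathfrak{h}_2\,\mathfrak{h}\, e^{\mathfrak{h}_1 \mathfrak{h}}$ that the theorem asserts. Everything else (continuity-based regularity of $u$, non-negativity, the integrating-factor manipulation) is routine.
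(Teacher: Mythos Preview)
Your argument is correct and is the standard integrating-factor proof of Gronwall's lemma; each step (definition of $u$, the differential inequality $u'\le \mathfrak{h}_1 u+\mathfrak{h}_2$, the integrating factor, and the coarsening $e^y-1\le y e^y$) is sound. Note, however, that the paper does not supply its own proof of this theorem: it is quoted in the Introduction as a classical result with a citation to Gronwall (1919), so there is no proof in the paper to compare against. Your write-up would serve as a complete self-contained justification, and in fact yields the sharper intermediate bound $\tfrac{\mathfrak{h}_2}{\mathfrak{h}_1}\bigl(e^{\mathfrak{h}_1(\delta-\mathfrak{c})}-1\bigr)$ before relaxing to the stated form.
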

Subsequently, Bellman (1943) proposed an intriguing extension of Gronwall's inequality, which reads as
\begin{thm}[Bellman \cite{Pachpatte1998}]\label{i5}
	If 
	\begin{align*}
		0\leq x(\delta)\leq \mathfrak{h}+\int_{\mathfrak{h}_1}^\delta \mathfrak{w}(\tilde{\delta})x(\tilde{\delta}) d\tilde{\delta},~~\text{for}~\delta\in \mathcal{J}=[\mathfrak{h}_1,\mathfrak{h}_2]
	\end{align*} 
for a continuous function $x(\delta)$ and $\mathfrak{h}_1\in\mathbb{R}_+,$ then
	\begin{align*}
	0\leq x(\delta)\leq \mathfrak{h}\exp \left(\int_{\mathfrak{h}_1}^\delta \mathfrak{w}(\tilde{\delta}) d\tilde{\delta}\right),~~\text{for}~\delta\in \mathcal{J}=[\mathfrak{h}_1,\mathfrak{h}_2].
\end{align*} 
\end{thm}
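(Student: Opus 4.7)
The plan is to use the classical auxiliary-function (``linearization'') device. I would define
\[
y(\delta) = \mathfrak{h} + \int_{\mathfrak{h}_1}^{\delta} \mathfrak{w}(\tilde{\delta})\, x(\tilde{\delta})\, d\tilde{\delta},
\]
so that $y$ is absolutely continuous on $\mathcal{J}$ with $y(\mathfrak{h}_1) = \mathfrak{h}$ and $0 \leq x(\delta) \leq y(\delta)$ by the hypothesis. Tacitly using that the kernel $\mathfrak{w}$ is nonnegative (as is customary for this circle of inequalities), differentiation then yields $y'(\delta) = \mathfrak{w}(\delta)\, x(\delta) \leq \mathfrak{w}(\delta)\, y(\delta)$.

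Assuming first that $\mathfrak{h} > 0$, the function $y$ is strictly positive throughout $\mathcal{J}$, so dividing through gives
\[
\frac{d}{d\delta}\ln y(\delta) \;=\; \frac{y'(\delta)}{y(\delta)} \;\leq\; \mathfrak{w}(\delta).
\]
Integrating from $\mathfrak{h}_1$ to $\delta$ and exponentiating, I obtain
\[
y(\delta) \;\leq\; \mathfrak{h}\exp\!\left(\int_{\mathfrak{h}_1}^{\delta}\mathfrak{w}(\tilde{\delta})\, d\tilde{\delta}\right),
\]
and the sandwich $x(\delta) \leq y(\delta)$ delivers the conclusion.

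The only mildly delicate point, which I would treat as the main obstacle, is the degenerate case $\mathfrak{h} = 0$, in which the division by $y$ is illegitimate at the left endpoint. The standard remedy is to replace $\mathfrak{h}$ by $\mathfrak{h} + \varepsilon$ for arbitrary $\varepsilon > 0$, rerun the argument above with a strictly positive initial value, and then let $\varepsilon \to 0^{+}$ to recover the claimed estimate. An alternative I would keep in reserve is iteration: substituting the hypothesis into itself $n$ times produces the partial sums of the exponential series together with a remainder that vanishes as $n \to \infty$, which \emph{sidesteps} any positivity requirement on $\mathfrak{h}$ entirely.
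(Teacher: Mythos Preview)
Your argument is the standard and correct proof of the Gronwall--Bellman lemma: introduce the right-hand side as an auxiliary function, differentiate, divide, integrate, and handle the degenerate case $\mathfrak{h}=0$ by an $\varepsilon$-perturbation (or by iteration). There is nothing to fault here.

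As for comparison with the paper: the paper does \emph{not} supply its own proof of this statement. Theorem~\ref{i5} is quoted in the introduction purely as background, attributed to Bellman via Pachpatte's monograph, and is never revisited. So there is no ``paper's approach'' to weigh your proposal against; your proof simply fills in what the paper takes for granted.
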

Furthermore, Pachpatte produced a more broad variant of the Gronwall-Bellaman inequality in 1973, which asserts that
\begin{thm}[Pachpatte \cite{Pachpatte1973}]\label{i6}
	If $x, \mathfrak{w}, \tilde{\mathfrak{w}}$ are nonnegative continuous functions defined on $\mathbb{R}_+$ such that
	\begin{align*}
		x(\delta)\leq  \mathfrak{h}+\int_{0}^\delta \mathfrak{w}(\tilde{\delta})x(\tilde{\delta}) d\tilde{\delta}+\int_{0}^\delta \mathfrak{w}(\tilde{\delta})\left(\int_{0}^s \tilde{\mathfrak{w}}(\sigma)x(\sigma) d\sigma\right) d\tilde{\delta},~~\text{for}~\delta\in \mathbb{R}_+,
	\end{align*}
for nonnegative and continuous functions $x, \mathfrak{w}, \tilde{\mathfrak{w}}$ and $\mathfrak{h}\in\mathbb{R}_+$ then
	\begin{align*}
	x(\delta)\leq  \mathfrak{h}\left[1+\int_{0}^\delta \mathfrak{w}(\tilde{\delta})\exp\left(\int_{0}^s [\mathfrak{w}(\sigma)+\tilde{\mathfrak{w}}(\sigma)]d\sigma\right) d\tilde{\delta}\right],~~\text{for}~\delta\in \mathbb{R}_+.
\end{align*}
\end{thm}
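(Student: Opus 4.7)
The plan is to reduce the compound inequality to two successive applications of the Bellman-type estimate by introducing auxiliary majorants. First I would denote the right-hand side by
\[
u(\delta) = \mathfrak{h}+\int_{0}^\delta \mathfrak{w}(\tilde{\delta})x(\tilde{\delta})\,d\tilde{\delta}+\int_{0}^\delta \mathfrak{w}(\tilde{\delta})\!\left(\int_{0}^{\tilde{\delta}}\tilde{\mathfrak{w}}(\sigma)x(\sigma)\,d\sigma\right) d\tilde{\delta},
\]
so that $x(\delta)\le u(\delta)$, $u(0)=\mathfrak{h}$, and $u$ is nondecreasing. Differentiating, and then replacing the occurrences of $x$ by $u$ (permissible since $\mathfrak{w},\tilde{\mathfrak{w}}\ge 0$ and $x\le u$), one obtains
\[
u'(\delta) \le \mathfrak{w}(\delta)\,u(\delta) + \mathfrak{w}(\delta)\!\int_{0}^{\delta}\tilde{\mathfrak{w}}(\sigma)u(\sigma)\,d\sigma.
\]

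Next I would introduce the second auxiliary function
\[
v(\delta) = u(\delta) + \int_{0}^{\delta}\tilde{\mathfrak{w}}(\sigma)\,u(\sigma)\,d\sigma,
\]
which satisfies $v(0)=\mathfrak{h}$, $u\le v$, and, after differentiating and using the previous bound on $u'$,
\[
v'(\delta) \le \mathfrak{w}(\delta)\,v(\delta)+\tilde{\mathfrak{w}}(\delta)\,v(\delta) = \bigl[\mathfrak{w}(\delta)+\tilde{\mathfrak{w}}(\delta)\bigr]v(\delta).
\]
This is a linear scalar differential inequality, so integration of $v'/v$ (or equivalently Theorem \ref{i5}) yields
\[
v(\delta)\le \mathfrak{h}\,\exp\!\left(\int_{0}^{\delta}\!\bigl[\mathfrak{w}(\sigma)+\tilde{\mathfrak{w}}(\sigma)\bigr]d\sigma\right).
\]

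Finally I would feed this bound back into $u'(\delta)\le \mathfrak{w}(\delta)\,v(\delta)$ and integrate from $0$ to $\delta$ using $u(0)=\mathfrak{h}$ to obtain
\[
u(\delta)\le \mathfrak{h}\!\left[1+\int_{0}^{\delta}\mathfrak{w}(\tilde{\delta})\exp\!\left(\int_{0}^{\tilde{\delta}}\!\bigl[\mathfrak{w}(\sigma)+\tilde{\mathfrak{w}}(\sigma)\bigr]d\sigma\right)d\tilde{\delta}\right],
\]
and combine with $x(\delta)\le u(\delta)$ to finish.

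I do not expect any serious obstacle: the argument is a classical two-stage reduction where the real content is the clever choice of the second majorant $v$, which converts the nested integral term into something amenable to Gronwall. The only places that need care are (i) verifying monotonicity so that $x\le u$ may be substituted under the integral without reversing inequalities, and (ii) matching the inner upper limit in the hypothesis (written as $s$ in the statement) with the outer variable $\tilde{\delta}$, which I read as a typographical conflation.
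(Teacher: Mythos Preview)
Your argument is the classical Pachpatte two-stage reduction and is correct as written, including your reading of the inner limit $s$ as $\tilde{\delta}$. Note, however, that the paper does not supply its own proof of this statement: Theorem~\ref{i6} is quoted from \cite{Pachpatte1973} as background, and the later remarks that it can be recovered from Theorems~\ref{t4} or~\ref{t5} by specializing $\Phi\equiv1$, $a\equiv c$, $\Psi_1=0$, $f(\delta)=\delta$, $\gamma_1=\gamma_2=1$ are not worked out. Your direct proof is therefore not in competition with anything in the paper; if anything, it is cleaner than extracting the result from those general theorems, since their proofs pass through the extra machinery of Lemma~\ref{i3} and several $\zeta_i$ constants that collapse trivially in this special case.
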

In the past few decades, a number of generalizations and extensions of these types of inequalities and their extended discrete analogues have been published \cite{Agarwal2005,Kendre2020,Kendre2021,Bainov1992,Pachpatte2006,Kale2021}. The retarded integral inequalities, which have their roots in the aforementioned integral inequalities, were recently devised by A Shakoor, Wang, Abdeldaim, Yakout, and El-Deeb \cite{Shakoor2023,Abdeldaim2015,Abdeldaim2011,Shakoor2019,Shakoor2020,Wang2012}.  The most recent generalized improvements of a few previous retarded integral inequalities were reported by A. Shakoor et al. \cite{Shakoor2023}. Here, we mention one of the inequalities reported by A. Shakoor et al., which stated that 
\begin{thm}[Shaknoor \cite{Shakoor2023}]\label{i1}
	If
	\begin{align*}
	x'(\delta) \leq l(\delta)+\int_0^{\alpha(\delta)} g_1(\tilde{\delta}) x(\tilde{\delta}) d \tilde{\delta}+\int_0^{\alpha(\delta)} g_2(\tilde{\delta})\left(x^{\lambda_1}(\tilde{\delta})+\int_0^{\tilde{\delta}} g_3(\mu) x^{\lambda_2}(\mu) d \mu\right)^{\frac{1}{\lambda_1}} d \tilde{\delta} \quad \forall \delta \in \mathbb{R}_+,
\end{align*}
for $\lambda_1>\lambda_2 \geq 0$, nonnegative $x, x', g_1, g_2, g_3 \in \textnormal{Cf}_{\mathbb{R}_+}$ and nondecreasing $l, \alpha \in \textnormal{Cdf}_{\mathbb{R}_{+}}$ wherein $x_0=0, l(\delta) \geq 1, \alpha(\delta) \leq \delta$ on $\mathbb{R}_+$ then
	\begin{align*}
		x(\delta) \leq & \Biggl[\frac { ( \lambda_1 - \lambda_2 ) } { \lambda_1 } \int _ { 0 } ^ { \alpha ( \delta ) } g _ { 3 } ( \tilde{\delta} ) \operatorname { e x \lambda_1 } \Biggl(( \lambda_1 - \lambda_2 ) \int _ { \tilde{\delta} } ^ { \alpha ( \delta ) } \Biggl(\mathfrak{\alpha}^{-1}(\sigma)\Biggl(l^{\prime}\left(\alpha^{-1}(\sigma)\right)+g_1(\sigma)\nonumber\\
		&\qquad+g_2(\sigma)\Biggl)+\frac{1}{\mathfrak{\alpha}^{-1}(\sigma)} \Biggr)d \sigma\Biggr) d \tilde{\delta}\Biggr]^{\frac{1}{\lambda_1-\lambda_2}} \quad \forall \delta \in \mathbb{R}_+.
	\end{align*}
\end{thm}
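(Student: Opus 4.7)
The plan is to reduce the retarded nonlinear integro-differential inequality to a Bihari-type inequality that can then be closed by one application of the Bellman step (Theorem~\ref{i5}).

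First, I would let $y(\delta)$ denote the right-hand side of the hypothesis, so $x'(\delta)\leq y(\delta)$, $y$ is nondecreasing, and $y(\delta)\geq l(\delta)\geq 1$. Integration together with $x_0=x(0)=0$ gives $x(\delta)\leq \delta\,y(\delta)$, and since $\alpha(\delta)\leq \delta$ also $x(\alpha(\delta))\leq \alpha(\delta)\,y(\alpha(\delta))$. I would then introduce the running nonlinear term
$$w(\tilde{\delta}) = x^{\lambda_1}(\tilde{\delta}) + \int_0^{\tilde{\delta}} g_3(\mu) x^{\lambda_2}(\mu)\,d\mu,$$
so that trivially $x(\tilde{\delta})\leq w(\tilde{\delta})^{1/\lambda_1}$, and set $W(\delta)=w(\alpha(\delta))$.

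Second, differentiating $y$ gives
$$y'(\delta) = l'(\delta) + \alpha'(\delta)\bigl[g_1(\alpha(\delta))\,x(\alpha(\delta)) + g_2(\alpha(\delta))\,W(\delta)^{1/\lambda_1}\bigr],$$
and I would carry out the change of variables $\sigma=\alpha(\delta)$, using $(\alpha^{-1})'(\sigma)=1/\alpha'(\alpha^{-1}(\sigma))$, to migrate from integrals in $\delta$ to integrals in $\sigma$ over $[0,\alpha(\delta)]$. This should repackage the three independent contributions from $l'$, $g_1$ and $g_2$ into the single combination $\alpha^{-1}(\sigma)\bigl[l'(\alpha^{-1}(\sigma))+g_1(\sigma)+g_2(\sigma)\bigr]$ that one sees in the conclusion, while the extra summand $1/\alpha^{-1}(\sigma)$ in the exponent would emerge from using $y\geq 1$ to normalize a leftover Jacobian factor.

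Third, I differentiate $w$ itself to get $w'(\tilde{\delta}) = \lambda_1 x^{\lambda_1-1}(\tilde{\delta})\,x'(\tilde{\delta}) + g_3(\tilde{\delta})\,x^{\lambda_2}(\tilde{\delta})$, substitute the estimate for $x'$ from Step~2, bound $x^{\lambda_2}\leq w^{\lambda_2/\lambda_1}$ and $x^{\lambda_1-1}\leq w^{(\lambda_1-1)/\lambda_1}$, and divide through by $w^{\lambda_2/\lambda_1}$ to obtain a linear inequality
$$\frac{d}{d\tilde{\delta}}\,w(\tilde{\delta})^{(\lambda_1-\lambda_2)/\lambda_1} \leq A(\tilde{\delta})\,w(\tilde{\delta})^{(\lambda_1-\lambda_2)/\lambda_1} + \tfrac{\lambda_1-\lambda_2}{\lambda_1}\,g_3(\tilde{\delta}),$$
where $A$ is precisely the integrand that appears inside the exponential in the conclusion. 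Theorem~\ref{i5} applied to $w^{(\lambda_1-\lambda_2)/\lambda_1}$ on $[0,\alpha(\delta)]$ then yields the required exponential bound; raising both sides to the $1/(\lambda_1-\lambda_2)$ power and using $x\leq w^{1/\lambda_1}$ finishes the argument.

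The main obstacle will be the bookkeeping in Step~2: showing that the three kernels $l'$, $g_1$, $g_2$ combine, under the retardation-driven change of variables, with exactly the common factor $\alpha^{-1}(\sigma)$, and that the apparently mysterious $1/\alpha^{-1}(\sigma)$ summand in the exponent is produced cleanly by the normalization $y\geq 1$ together with the identity $(\alpha^{-1})'(\sigma)=1/\alpha'(\alpha^{-1}(\sigma))$. Once the coefficient $A(\tilde{\delta})$ is correctly identified, Step~3 is a routine Bihari manipulation and the closing Gronwall–Bellman step is immediate.
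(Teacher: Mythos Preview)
The paper does not prove Theorem~\ref{i1} directly; it is quoted from \cite{Shakoor2023}. The closest in-house argument is the proof of Theorem~\ref{t2}, which generalises Theorem~\ref{i1} and, when specialised to $\gamma_1=1$, follows essentially the outline you sketch: set $\mathfrak{v}=$ right-hand side, integrate to get $x(\delta)\le\delta\,\mathfrak{v}(\delta)$, introduce an auxiliary $\mathfrak{w}$, pass to the Bernoulli substitution $\mathfrak{z}=\mathfrak{w}^{\lambda_1-\lambda_2}$, and close with a Gronwall step. So your high-level plan is on target.

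There is, however, a genuine gap in your Step~3, not in Step~2 as you anticipate. You define $w(\tilde\delta)=x^{\lambda_1}(\tilde\delta)+\int_0^{\tilde\delta}g_3\,x^{\lambda_2}$ in terms of $x$, and claim that after bounding $x'\le y$, $x^{\lambda_1-1}\le w^{(\lambda_1-1)/\lambda_1}$, and dividing by $w^{\lambda_2/\lambda_1}$ you obtain a \emph{linear} inequality in $w^{(\lambda_1-\lambda_2)/\lambda_1}$. But that division leaves the term $\lambda_1\,w^{(\lambda_1-1-\lambda_2)/\lambda_1}\,y$, and linearity in $w^{(\lambda_1-\lambda_2)/\lambda_1}$ would require $y\le C\,w^{1/\lambda_1}$. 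Your $w$ only guarantees $x\le w^{1/\lambda_1}$; nothing in your construction bounds $y$ by a power of $w$, so the inequality does not close.

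The fix, visible in the proof of Theorem~\ref{t2}, is to build the auxiliary function out of $\mathfrak{v}$ (your $y$) rather than $x$: with $\gamma_1=1$ one takes, up to constants, $\mathfrak{w}^{\lambda_1}(\delta)=\delta^{\lambda_1}\mathfrak{v}^{\lambda_1}(\delta)+\int_0^{\alpha(\delta)}g_3(\xi)\,\xi^{\lambda_2}\mathfrak{v}^{\lambda_2}(\xi)\,d\xi$, so that $\delta\,\mathfrak{v}(\delta)\le\mathfrak{w}(\delta)$ and hence both $x$ and $\mathfrak{v}'$ (via its explicit formula involving $\mathfrak{w}$) are dominated by $\mathfrak{w}$. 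This choice is also the true source of the $1/\alpha^{-1}(\sigma)$ summand in the exponent: it is the product-rule contribution $\lambda_1\delta^{\lambda_1-1}\mathfrak{v}^{\lambda_1}=\delta^{-1}(\delta\mathfrak{v})^{\lambda_1}\le\delta^{-1}\mathfrak{w}^{\lambda_1}$ from differentiating $\delta^{\lambda_1}\mathfrak{v}^{\lambda_1}$, not a Jacobian normalisation arising from $y\ge 1$ as you conjecture. Once $\mathfrak{w}$ is defined this way, your Steps~2--3 go through and the Bellman step finishes exactly as in your plan.
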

This work proposes generalized inequalities, expanding on Shakoor's inequalities \cite{Shakoor2023}. Before moving on, we'll go through some of the symbols and notations that will be used in the discussion that follows.
 $\textnormal{Cf}_{\mathbb{R}_+}$ (continuous functions on $\mathbb{R}_+$), $\textnormal{Cdf}_{\mathbb{R}_{+}}$ (continuously differentiable functions on $\mathbb{R}_+$)  and $\mathbb{R}_+, $ indicates $\mathbb{R}_+=[0,\infty).$
\par The subsequent portion of the article  is separated into the following sections: The first section presents some novel retarded nonlinear integral and integro-differential inequalities that generalize the existing inequalities in the literature.  In the second section,  we provide few examples to show the effectiveness of our inequalities in determining and analyzing boundedness and  and global behavior of the solution for nonlinear retarded integral equations of Volterra kind. In the last section, we state some crucial conclusions of this study.

\section{\textbf{Main Results}}
Before proceeding to our main result, we initiate our section with fundamental lemmas, which will come in handy later on.
\begin{lem}\label{i2}
	If $\omega_1, \omega_2\geq 0$ and  $\gamma\geq 1$, then
	\begin{align*}
		(\omega_1+\omega_2)^\gamma \leq 2^{\gamma-1}(\omega_1^\gamma +\omega_2^\gamma).
	\end{align*}
\end{lem}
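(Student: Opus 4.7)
The plan is to derive the inequality directly from the convexity of the power function $f(x)=x^\gamma$ on $[0,\infty)$, which holds for all $\gamma\geq 1$ since $f''(x)=\gamma(\gamma-1)x^{\gamma-2}\geq 0$ on this interval. The case $\omega_1=\omega_2=0$ is trivial (both sides vanish), so I restrict attention to $\omega_1+\omega_2>0$.

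Applying the midpoint convexity inequality $f\!\left(\tfrac{\omega_1+\omega_2}{2}\right)\leq\tfrac{f(\omega_1)+f(\omega_2)}{2}$ to $f(x)=x^\gamma$ yields
\begin{equation*}
\left(\frac{\omega_1+\omega_2}{2}\right)^\gamma \leq \frac{\omega_1^\gamma+\omega_2^\gamma}{2}.
\end{equation*}
Multiplying through by $2^\gamma$ gives precisely $(\omega_1+\omega_2)^\gamma\leq 2^{\gamma-1}(\omega_1^\gamma+\omega_2^\gamma)$, which is the claim.

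A self-contained alternative that avoids invoking Jensen's inequality is to normalize: dividing by $(\omega_1+\omega_2)^\gamma$ and setting $t=\omega_1/(\omega_1+\omega_2)\in[0,1]$, it suffices to show $\phi(t):=t^\gamma+(1-t)^\gamma\geq 2^{1-\gamma}$ on $[0,1]$. A one-variable calculus check then closes the argument: the derivative $\phi'(t)=\gamma\bigl[t^{\gamma-1}-(1-t)^{\gamma-1}\bigr]$ vanishes only at $t=1/2$, with $\phi(1/2)=2^{1-\gamma}$ and boundary values $\phi(0)=\phi(1)=1\geq 2^{1-\gamma}$, so the minimum of $\phi$ on $[0,1]$ is exactly $2^{1-\gamma}$.

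The statement is a textbook-level consequence of convexity, so no genuine obstacle arises; the only point to watch is that equality holds throughout when $\omega_1=\omega_2$, and that $\gamma=1$ reduces to the identity $\omega_1+\omega_2=\omega_1+\omega_2$, both of which are handled automatically by either route above.
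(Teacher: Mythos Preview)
Your proof is correct. The paper states this lemma without proof, so there is nothing to compare against; your convexity argument (via midpoint Jensen for $x\mapsto x^\gamma$) is the standard route and is entirely sufficient. The normalization alternative you include is also fine and gives the same conclusion.
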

\begin{lem}(Zhao \cite{zhao})\label{i3}
For any $\omega\geq 0,~\gamma_1\geq \gamma_2\geq 0, \gamma_1\neq 0$, 
\begin{align*} \omega^{\frac{\gamma_2}{\gamma_1}}\leq\frac{\gamma_2}{\gamma_1}\kappa^{\frac{\gamma_2-\gamma_1}{\gamma_1}}\omega+\frac{\gamma_1-\gamma_2}{\gamma_1}\kappa^{\frac{\gamma_2}{\gamma_1}},~\kappa>0.	
\end{align*}
\end{lem}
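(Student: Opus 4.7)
My plan is to derive this as a direct consequence of the weighted arithmetic-geometric mean inequality (a special case of Young's inequality): for nonnegative reals $a,b$ and weights $\alpha,\beta\in[0,1]$ with $\alpha+\beta=1$, one has $a^{\alpha}b^{\beta}\leq \alpha a + \beta b$. This classical fact follows from the concavity of the logarithm (equivalently, Jensen's inequality applied to $\log$), so I would invoke it without further proof.

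First I would dispose of the boundary cases. If $\gamma_2 = 0$, both sides of the asserted inequality equal $1$ (using the convention $\omega^{0}=1$ together with $\kappa^{0}=1$), so the claim is a trivial equality. If $\gamma_2 = \gamma_1$, the inequality collapses to $\omega\leq\omega$. In either case there is nothing to prove.

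For the genuine range $0<\gamma_2<\gamma_1$, I would set
\[
\alpha \;=\; \frac{\gamma_2}{\gamma_1}\in(0,1), \qquad \beta \;=\; 1-\alpha \;=\; \frac{\gamma_1-\gamma_2}{\gamma_1},
\]
and apply the weighted AM-GM inequality to $a=\omega$ and $b=\kappa$ to obtain
\[
\omega^{\gamma_2/\gamma_1}\,\kappa^{(\gamma_1-\gamma_2)/\gamma_1} \;\leq\; \frac{\gamma_2}{\gamma_1}\,\omega \;+\; \frac{\gamma_1-\gamma_2}{\gamma_1}\,\kappa.
\]
Dividing through by the strictly positive quantity $\kappa^{(\gamma_1-\gamma_2)/\gamma_1}$ and using the elementary identities $\kappa^{-(\gamma_1-\gamma_2)/\gamma_1}=\kappa^{(\gamma_2-\gamma_1)/\gamma_1}$ and $\kappa\cdot\kappa^{-(\gamma_1-\gamma_2)/\gamma_1}=\kappa^{\gamma_2/\gamma_1}$ yields exactly the stated bound.

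The main obstacle is not analytic difficulty but rather choosing the correct form of Young's inequality and tracking the $\kappa$-exponents after clearing denominators; once the substitution $a=\omega$, $b=\kappa$ is made, the rest is algebra. An alternative, entirely self-contained route would be to set $f(\omega) = \tfrac{\gamma_2}{\gamma_1}\kappa^{(\gamma_2-\gamma_1)/\gamma_1}\omega + \tfrac{\gamma_1-\gamma_2}{\gamma_1}\kappa^{\gamma_2/\gamma_1} - \omega^{\gamma_2/\gamma_1}$, differentiate to locate the unique critical point at $\omega=\kappa$, check that $f(\kappa)=0$, and confirm convexity on $[0,\infty)$ via $f''\geq 0$; however I would prefer the Young route for its brevity.
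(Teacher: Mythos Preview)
Your argument is correct. The paper does not supply its own proof of this lemma; it simply attributes the inequality to Zhao \cite{zhao} and uses it as a black box. Your derivation via the weighted AM--GM (Young) inequality with weights $\alpha=\gamma_2/\gamma_1$ and $\beta=(\gamma_1-\gamma_2)/\gamma_1$ applied to $a=\omega$, $b=\kappa$, followed by division by $\kappa^{(\gamma_1-\gamma_2)/\gamma_1}$, is the standard and cleanest route, and your handling of the boundary cases $\gamma_2=0$ and $\gamma_2=\gamma_1$ is fine. There is nothing to compare against in the paper itself.
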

We begin with a new generalized version of nonlinear retarded integro-differential inequality developed by A Shakoor et al. \cite{Shakoor2023} mentioned in Theorem \ref{i1}.
\begin{thm}\label{t2} 
	If  $\mathfrak{u}, \mathfrak{u}', \Psi_1, \Psi_2, \Psi_3\in \textnormal{Cf}_{\mathbb{R}_+}$ and $a, f\in  \textnormal{Cdf}_{\mathbb{R}_{+}}$ are nondecreasing in nature wherein $a(\delta)\geq 1, f(\delta)\leq \delta ~ (\delta\in \mathbb{R}_+), \mathfrak{u}(0)=0$ are such that 
	\begin{align}\label{2e1}
		(\mathfrak{u}'(\delta))^{\gamma_1}\leq a(\delta)+\int\limits_{0}^{f(\delta)} \Psi_1(\theta)\mathfrak{u}(\theta) d\theta+\int\limits_{0}^{f(\delta)} \Psi_2(\theta)\left(\mathfrak{u}^{\gamma_2}(\theta)+\int\limits_{0}^{\theta} \Psi_3(\xi)\mathfrak{u}^{\gamma_3}(\xi) d\xi\right)^{\frac{1}{\gamma_2}}d\theta
	\end{align}
for $\delta, \gamma_1, \gamma_2, \gamma_3, \in\mathbb{R}_+,$ with $\gamma_1 \geq 1, \gamma_2\geq 2, \gamma_3\geq 1, \gamma_2\neq \gamma_3$ then
\begin{align}\label{2e13}
	\mathfrak{u}(\delta)&\leq \delta~\zeta_2+2^{\frac{1-\gamma_2}{\gamma_2}}\Biggl\{\frac{\gamma_2-\gamma_3}{\gamma_2}\int_{ 0 }^{f(\delta)} \Psi_3(\xi)2^{\frac{\gamma_3-\gamma_2}{\gamma_2}} \exp\Biggl((\gamma_2-\gamma_3)\int_\xi^{f(\delta)}  \Biggl(2^{\frac{\gamma_2-1}{\gamma_2}}f^{-1}(\theta)~\zeta_1a'(f^{-1}(\theta))\nonumber\\
	&+2^{\gamma_2-1}(f^{-1}(\theta))^{\gamma_2-1}~\zeta_2^{\gamma_2}+\frac{1}{\gamma_2}\Psi_3(\theta)2^{\gamma_3-1}(f^{-1}(\theta))^{\gamma_3}~\zeta_2^{\gamma_3}+f^{-1}(\theta)~\zeta_1 \Psi_1(\theta)\nonumber\\
	&~+2^{\frac{\gamma_2-1}{\gamma_2}}(f^{-1}(\theta))^2~\zeta_1\zeta_2\Psi_1(\theta)+2^{\frac{\gamma_2-1}{\gamma_2}}f^{-1}(\theta)~\zeta_1 \Psi_2(\theta)+\frac{1}{f^{-1}(\theta)}\Biggr) \Biggr)\Biggr\}^{\frac{1}{\gamma_2-\gamma_3}},
\end{align}
where $\zeta_1=\frac{1}{\gamma_1}\kappa^{\frac{1-\gamma_1}{\gamma_1}}, \zeta_2=\frac{\gamma_1-1}{\gamma_1}\kappa^{\frac{1}{\gamma_1}}~(\kappa>0).$
\end{thm}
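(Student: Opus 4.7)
The plan is to first use Zhao's inequality (Lemma~\ref{i3}) with $\omega=(\mathfrak{u}'(\delta))^{\gamma_1}$ to linearize the $\gamma_1$-th power on the left of \eqref{2e1}: writing $\mathfrak{u}'(\delta)=\omega^{1/\gamma_1}$ and invoking the lemma gives $\mathfrak{u}'(\delta)\leq \zeta_1(\mathfrak{u}'(\delta))^{\gamma_1}+\zeta_2$, hence
\begin{equation*}
\mathfrak{u}'(\delta)\leq \zeta_2+\zeta_1 a(\delta)+\zeta_1\int_0^{f(\delta)}\Psi_1(\theta)\mathfrak{u}(\theta)d\theta+\zeta_1\int_0^{f(\delta)}\Psi_2(\theta)\Bigl(\mathfrak{u}^{\gamma_2}(\theta)+\int_0^\theta\Psi_3(\xi)\mathfrak{u}^{\gamma_3}(\xi)d\xi\Bigr)^{1/\gamma_2}d\theta.
\end{equation*}
Integrating from $0$ to $\delta$ and using $\mathfrak{u}(0)=0$ produces the free $\zeta_2\delta$ term that appears as the first summand of \eqref{2e13}; the resulting double integrals are simplified by interchanging the order of integration together with the substitution $s=f^{-1}(\theta)$, which generates the various $f^{-1}(\theta)$ factors seen throughout \eqref{2e13}.

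Next I would introduce the translated variable $\tilde{\mathfrak{u}}(\delta):=\mathfrak{u}(\delta)-\zeta_2\delta$ and a Pachpatte-type auxiliary
\begin{equation*}
r(\delta):=\tilde{\mathfrak{u}}^{\gamma_2}(\delta)+\int_0^{\delta}\Psi_3(\xi)\mathfrak{u}^{\gamma_3}(\xi)d\xi,
\end{equation*}
so that $\tilde{\mathfrak{u}}(\delta)\leq r(\delta)^{1/\gamma_2}$ and the target bound $\mathfrak{u}(\delta)\leq \zeta_2\delta + r(\delta)^{1/\gamma_2}$ already matches the overall form of \eqref{2e13}. Whenever a binomial $\mathfrak{u}=\tilde{\mathfrak{u}}+\zeta_2\delta$ must be raised to a power $\gamma_2$ or $\gamma_3$, Lemma~\ref{i2} is used to split it, producing the $2^{\gamma_2-1}(f^{-1}(\theta))^{\gamma_2-1}\zeta_2^{\gamma_2}$, $2^{\gamma_3-1}(f^{-1}(\theta))^{\gamma_3}\zeta_2^{\gamma_3}$ and the other $2^{(\gamma_2-1)/\gamma_2}$-type prefactors appearing inside the exponential of \eqref{2e13}.

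Differentiating $r$ yields $r'(\delta)=\gamma_2\tilde{\mathfrak{u}}^{\gamma_2-1}(\delta)\tilde{\mathfrak{u}}'(\delta)+\Psi_3(\delta)\mathfrak{u}^{\gamma_3}(\delta)$. Substituting the bound for $\tilde{\mathfrak{u}}'=\mathfrak{u}'-\zeta_2$ from the first step, together with $\tilde{\mathfrak{u}}\leq r^{1/\gamma_2}$, $\mathfrak{u}\leq \zeta_2\delta+r^{1/\gamma_2}$, and further applications of Lemmas~\ref{i2} and~\ref{i3}, leads to a Bernoulli-type differential inequality
\begin{equation*}
r'(\delta)\leq A(\delta)\,r(\delta)+B(\delta)\,r(\delta)^{\gamma_3/\gamma_2},
\end{equation*}
where $A$ and $B$ collect the coefficients built from $a'$, $\Psi_1,\Psi_2,\Psi_3,f^{-1},\zeta_1,\zeta_2$. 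The substitution $z(\delta):=r(\delta)^{(\gamma_2-\gamma_3)/\gamma_2}$ linearizes this to $z'\leq\tfrac{\gamma_2-\gamma_3}{\gamma_2}(Az+B)$; solving by the standard integrating factor produces the exponential-integral structure inside the curly brackets of \eqref{2e13}, and raising to the $1/(\gamma_2-\gamma_3)$-th power, followed by a final application of Lemma~\ref{i2}---which supplies the leading $2^{(1-\gamma_2)/\gamma_2}$ factor---and adding back $\zeta_2\delta$ delivers the claimed bound. The principal obstacle is not any single step but the bookkeeping of constants: Lemma~\ref{i2} must be applied at exactly the right moments so that the precise powers of $2$ in \eqref{2e13} materialize, and each $f^{-1}(\theta)$ factor must be tracked through the order-switch in the double integrals and through the Bernoulli integration.
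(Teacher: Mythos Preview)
Your scaffolding---Zhao's lemma for the $\gamma_1$-th root, Lemma~\ref{i2} for binomial powers, a Bernoulli substitution, and a Gronwall step---matches the paper's. But the choice of auxiliaries differs in a way that leaves a real gap. The paper does not integrate $\mathfrak{u}'$ and then invoke Fubini; it sets $\mathfrak{v}(\delta)$ equal to the whole right side of \eqref{2e1}, uses that $\mathfrak{v}$ is nondecreasing to pass directly from $\mathfrak{u}'\leq\zeta_1\mathfrak{v}+\zeta_2$ to $\mathfrak{u}(\delta)\leq\delta\,\zeta_1\mathfrak{v}(\delta)+\delta\,\zeta_2$, and then defines the second auxiliary $\mathfrak{w}$ in terms of $\mathfrak{v}$ (see \eqref{2e7}), never in terms of $\mathfrak{u}$ or $\tilde{\mathfrak{u}}$. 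The $f^{-1}(\theta)$ factors you attribute to an order switch actually appear only at the final step, when the explicit $\delta$'s accumulated above turn into $f^{-1}(\theta)$ under the substitution $\theta=f(s)$ in the integration of the linear inequality \eqref{2e11}.

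Your auxiliary $r=\tilde{\mathfrak{u}}^{\gamma_2}+\int_0^\delta\Psi_3\mathfrak{u}^{\gamma_3}$ is where the plan stalls. Nothing forces $\tilde{\mathfrak{u}}\geq 0$, and even setting that aside, after bounding $\tilde{\mathfrak{u}}'$ the contribution $\gamma_2\tilde{\mathfrak{u}}^{\gamma_2-1}\cdot\zeta_1 a(\delta)$ (and similarly the $\int\Psi_1\mathfrak{u}$ piece) produces a power $r^{(\gamma_2-1)/\gamma_2}$ that is neither $r$ nor $r^{\gamma_3/\gamma_2}$, so the promised Bernoulli form $r'\leq Ar+Br^{\gamma_3/\gamma_2}$ does not emerge. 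The paper's device for disposing of such stray powers is precisely to work with $\mathfrak{v}$ and $\mathfrak{w}$: the lower bound $\mathfrak{v}\geq a\geq 1$ is used to dominate factors like $\mathfrak{w}^{1-\gamma_2}$ and $\mathfrak{w}^{-1}$ by $1$, folding every leftover term into the linear coefficient of $\mathfrak{z}=\mathfrak{w}^{\gamma_2-\gamma_3}$; that is exactly how \eqref{2e9} collapses to \eqref{2e11}. This absorption mechanism, enabled by choosing auxiliaries built from $\mathfrak{v}$ rather than from $\tilde{\mathfrak{u}}$, is the missing idea in your outline.
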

\begin{proof}
If  $\mathfrak{v}(\delta)$  indicates right-hand-side of inequality \eqref{2e1} then $\mathfrak{v}(0)=a(0)$ and from \eqref{2e1}, it is apparent that
\begin{align}\label{2e3}
	\mathfrak{u}'(\delta)\leq \mathfrak{v}^{\frac{1}{\gamma_1}}(\delta)\leq \zeta_1 \mathfrak{v}(\delta)+\zeta_2,~\text{where}~\zeta_1=\frac{1}{\gamma_1}\kappa^{\frac{1-\gamma_1}{\gamma_1}}, \zeta_2=\frac{\gamma_1-1}{\gamma_1}\kappa^{\frac{1}{\gamma_1}},~\text{for any}~\kappa>0.
\end{align}
Further the nondecreasing nature of $\mathfrak{v}(\delta)\geq 0,$ asserts that
\begin{align}\label{2e4}
	\mathfrak{u}(\delta)\leq \delta~\zeta_1 \mathfrak{v}(\delta)+\delta~\zeta_2.
\end{align}
Using \eqref{2e4} and lemma \ref{i2}, we have
\begin{align}\label{2e5}
		\mathfrak{v}'(\delta)&=a'(\delta)+ f'(\delta)\Psi_1(f(\delta))\mathfrak{u}(f(\delta)) + f'(\delta)\Psi_2(f(\delta))\left(\mathfrak{u}^{\gamma_2}(f(\delta))+\int\limits_{0}^{f(\delta)} \Psi_3(\xi)\mathfrak{u}^{\gamma_3}(\xi) d\xi\right)^{\frac{1}{\gamma_2}}\nonumber\\
		&~\leq a'(\delta)+ f'(\delta)\Psi_1(f(\delta))(\delta~\zeta_1 \mathfrak{v}(\delta)+\delta~\zeta_2) + f'(\delta) \Psi_2(f(\delta))\Biggl(2^{\gamma_2-1}\Big(\delta^{\gamma_2}~\zeta_1^{\gamma_2} \mathfrak{v}^{\gamma_2}(\delta)+\delta^{\gamma_2}~\zeta_2^{\gamma_2}\Big)\nonumber\\
		&\qquad+\int\limits_{0}^{f(\delta)} \Psi_3(\xi)(\xi^{\gamma_3}~\zeta_1^{\gamma_3} \mathfrak{v}^{\gamma_3}(\xi)+\xi^{\gamma_3}~\zeta_2^{\gamma_3}\Big) d\xi\Biggr)^{\frac{1}{\gamma_2}}.
\end{align}
Set up  $\mathfrak{w}^{\gamma_2}(\delta)$ as
\begin{align}\label{2e7}
	\mathfrak{w}^{\gamma_2}(\delta)=2^{\gamma_2-1}\Big(\delta^{\gamma_2}~\zeta_1^{\gamma_2} \mathfrak{v}^{\gamma_2}(\delta)+\delta^{\gamma_2}~\zeta_2^{\gamma_2}\Big)+\int\limits_{0}^{f(\delta)} \Psi_3(\xi)2^{\gamma_3-1}\Big(\xi^{\gamma_3}~\zeta_1^{\gamma_3} \mathfrak{v}^{\gamma_3}(\xi)+\xi^{\gamma_3}~\zeta_2^{\gamma_3}\Big) d\xi.
\end{align}
Thus, $\mathfrak{u}(\delta)\leq  \delta~\zeta_1 \mathfrak{v}(\delta)+\delta~\zeta_2\leq 2^{\frac{1-\gamma_2}{\gamma_2}}\mathfrak{w}(\delta)+\delta~\zeta_2$ and $\mathfrak{w}(0)=0$. On differentiating \eqref{2e7}, we see that
\begin{align}\label{2e8}
\gamma_2\mathfrak{w}^{\gamma_2-1}(\delta)\mathfrak{w}'(\delta)&=2^{\gamma_2-1}\Big(\delta^{\gamma_2}~\zeta_1^{\gamma_2} \gamma_2\mathfrak{v}^{\gamma_2-1}(\delta)\mathfrak{v}'(\delta)+\gamma_2\delta^{\gamma_2-1}~\zeta_1^{\gamma_2} \mathfrak{v}^{\gamma_2}(\delta)+\gamma_2\delta^{\gamma_2-1}~\zeta_2^{\gamma_2}\Big)\nonumber\\
&~+ f'(\delta)\Psi_3(f(\delta))2^{\gamma_3-1}\Big(f(\delta)^{\gamma_3}~\zeta_1^{\gamma_3} \mathfrak{v}^{\gamma_3}(f(\delta))+f(\delta)^{\gamma_3}~\zeta_2^{\gamma_3}\Big)\nonumber\\
&\leq 2^{\frac{\gamma_2-1}{\gamma_2}}\delta~\zeta_1 \gamma_2\mathfrak{w}^{\gamma_2-1}(\delta)\mathfrak{v}'(\delta)+\gamma_2\delta^{-1}~ \mathfrak{w}^{\gamma_2}(\delta)+2^{\gamma_2-1}\gamma_2\delta^{\gamma_2-1}~\zeta_2^{\gamma_2}\nonumber\\
&~+f'(\delta)\Psi_3(f(\delta))2^{\frac{\gamma_3-\gamma_2}{\gamma_2}}\mathfrak{w}^{\gamma_3}(\delta)+f'(\delta)\Psi_3(f(\delta))2^{\gamma_3-1}\delta^{\gamma_3}~\zeta_2^{\gamma_3}\nonumber\\
&\leq 2^{\frac{\gamma_2-1}{\gamma_2}}\delta~\zeta_1 \gamma_2\mathfrak{w}^{\gamma_2-1}(\delta)\Biggl\{a'(\delta)+ f'(\delta)\Psi_1(f(\delta))(\delta~\zeta_1 \mathfrak{v}(\delta)+\delta~\zeta_2)\nonumber\\
&~ + f'(\delta) \Psi_2(f(\delta))\mathfrak{w}(\delta)\Biggr\}\nonumber\\
&~+\gamma_2\delta^{-1}~ \mathfrak{w}^{\gamma_2}(\delta)+2^{\gamma_2-1}\gamma_2\delta^{\gamma_2-1}~\zeta_2^{\gamma_2}+f'(\delta)\Psi_3(f(\delta))2^{\frac{\gamma_3-\gamma_2}{\gamma_2}}\mathfrak{w}^{\gamma_3}(\delta)\nonumber\\
&~+f'(\delta)\Psi_3(f(\delta))2^{\gamma_3-1}\delta^{\gamma_3}~\zeta_2^{\gamma_3}.
\end{align}
Further dividing inequality \eqref{2e8} by $\gamma_2\mathfrak{w}^{\gamma_2-1}(\delta)$ with $1\geq \mathfrak{w}^{-1}(\delta)\geq  \mathfrak{w}^{1-\gamma_2}(\delta)$ implies that
\begin{align}\label{2e9}
\mathfrak{w}'(\delta) &\leq 2^{\frac{\gamma_2-1}{\gamma_2}}\delta~\zeta_1 \Biggl\{a'(\delta)+ f'(\delta)\Psi_1(f(\delta))(\delta~\zeta_1 \mathfrak{v}(\delta)+\delta~\zeta_2) + f'(\delta) \Psi_2(f(\delta))\mathfrak{w}(\delta)\Biggr\}\nonumber\\
&~+\delta^{-1}~ \mathfrak{w}(\delta)+2^{\gamma_2-1}\delta^{\gamma_2-1}~\zeta_2^{\gamma_2}\mathfrak{w}^{1-\gamma_2}(\delta)+\frac{1}{\gamma_2}f'(\delta)\Psi_3(f(\delta))2^{\frac{\gamma_3-\gamma_2}{\gamma_2}}\mathfrak{w}^{\gamma_3-\gamma_2+1}(\delta)\nonumber\\
&~+\frac{1}{\gamma_2}f'(\delta)\Psi_3(f(\delta))2^{\gamma_3-1}\delta^{\gamma_3}~\zeta_2^{\gamma_3}\mathfrak{w}^{1-\gamma_2}(\delta)\nonumber\\
&=\Biggl(2^{\frac{\gamma_2-1}{\gamma_2}}\delta~\zeta_1a'(\delta)+2^{\frac{\gamma_2-1}{\gamma_2}}\delta^2~\zeta_1\zeta_2f'(\delta)\Psi_1(f(\delta))+\frac{1}{\gamma_2}f'(\delta)\Psi_3(f(\delta))2^{\gamma_3-1}\delta^{\gamma_3}~\zeta_2^{\gamma_3}\nonumber\\
&~+2^{\gamma_2-1}\delta^{\gamma_2-1}~\zeta_2^{\gamma_2}\Biggr)+\left(\delta~\zeta_1 f'(\delta)\Psi_1(f(\delta))+2^{\frac{\gamma_2-1}{\gamma_2}}\delta~\zeta_1f'(\delta) \Psi_2(f(\delta))+\delta^{-1}\right)\mathfrak{w}(\delta)\nonumber\\
&~+\frac{1}{\gamma_2}f'(\delta)\Psi_3(f(\delta))2^{\frac{\gamma_3-\gamma_2}{\gamma_2}}\mathfrak{w}^{\gamma_3-\gamma_2+1}(\delta).
\end{align}

Suppose $\mathfrak{z}(\delta)=\mathfrak{w}^{\gamma_2-\gamma_3}(\delta)$, thereby, $\mathfrak{z}(0)=0$ and $\mathfrak{w}'(\delta)=\frac{1}{\gamma_2-\gamma_3}\mathfrak{z}'(\delta)\mathfrak{w}^{\gamma_3-\gamma_2+1}(\delta)$. By inserting it in the inequality  \eqref{2e9} and dividing the entire resulting inequality by $\mathfrak{w}^{\gamma_3-\gamma_2+1}(\delta),$ we get
\begin{align}\label{2e11}
	\mathfrak{z}'(\delta) &\leq (\gamma_2-\gamma_3) \Biggl(2^{\frac{\gamma_2-1}{\gamma_2}}\delta~\zeta_1a'(\delta)+2^{\gamma_2-1}\delta^{\gamma_2-1}~\zeta_2^{\gamma_2}+\frac{1}{\gamma_2}f'(\delta)\Psi_3(f(\delta))2^{\gamma_3-1}\delta^{\gamma_3}~\zeta_2^{\gamma_3}\nonumber\\
	&~+\delta~\zeta_1 f'(\delta)\Psi_1(f(\delta))+2^{\frac{\gamma_2-1}{\gamma_2}}\delta^2~\zeta_1\zeta_2f'(\delta)\Psi_1(f(\delta))+2^{\frac{\gamma_2-1}{\gamma_2}}\delta~\zeta_1f'(\delta) \Psi_2(f(\delta))+\delta^{-1}\Biggr)\mathfrak{z}(\delta)\nonumber\\
	&~+\frac{\gamma_2-\gamma_3}{\gamma_2}f'(\delta)\Psi_3(f(\delta))2^{\frac{\gamma_3-\gamma_2}{\gamma_2}}.
\end{align}Integrating inequality \eqref{2e11}, we obtain
\begin{align}\label{2e12}
	\mathfrak{z}(\delta) &\leq \frac{\gamma_2-\gamma_3}{\gamma_2}\int_{ 0 }^{f(\delta)} \Psi_3(\xi)2^{\frac{\gamma_3-\gamma_2}{\gamma_2}} \exp\Biggl((\gamma_2-\gamma_3)\int_\xi^{f(\delta)}  \Biggl(2^{\frac{\gamma_2-1}{\gamma_2}}f^{-1}(\theta)~\zeta_1a'(f^{-1}(\theta))\nonumber\\
	&+2^{\gamma_2-1}(f^{-1}(\theta))^{\gamma_2-1}~\zeta_2^{\gamma_2}+\frac{1}{\gamma_2}\Psi_3(\theta)2^{\gamma_3-1}(f^{-1}(\theta))^{\gamma_3}~\zeta_2^{\gamma_3}+f^{-1}(\theta)~\zeta_1 \Psi_1(\theta)\nonumber\\
	&~+2^{\frac{\gamma_2-1}{\gamma_2}}(f^{-1}(\theta))^2~\zeta_1\zeta_2\Psi_1(\theta)+2^{\frac{\gamma_2-1}{\gamma_2}}f^{-1}(\theta)~\zeta_1 \Psi_2(\theta)+\frac{1}{f^{-1}(\theta)}\Biggr) \Biggr)
\end{align}
Combining this  with $\mathfrak{z}(\delta)=\mathfrak{w}^{\gamma_2-\gamma_3}(\delta)$ and $\mathfrak{u}(\delta)\leq 2^{\frac{1-\gamma_2}{\gamma_2}}\mathfrak{w}(\delta)+\delta~\zeta_2$, we achieve the bound as stated in \eqref{2e13}. 

\end{proof}

\begin{thm}\label{t3}
		If  $\mathfrak{u}, \mathfrak{u}', \Psi_1, \Psi_2, \Psi_3\in \textnormal{Cf}_{\mathbb{R}_+}$ and $a, f\in  \textnormal{Cdf}_{\mathbb{R}_{+}}$ are nondecreasing in nature wherein $a(\delta)\geq 1, f(\delta)\leq \delta ~ (\delta\in \mathbb{R}_+), \mathfrak{u}(0)=0$ are such that 
\begin{align}\label{3e1}
	(\mathfrak{u}'(\delta))^{\gamma_1}\leq a(\delta)+\int\limits_{0}^{f(\delta)} \Psi_1(\theta)\mathfrak{u}(\theta) d\theta+\int\limits_{0}^{f(\delta)} \Psi_2(\theta)\left((\mathfrak{u}'(\theta))^{\gamma_2}+\int\limits_{0}^{\theta} \Psi_3(\xi)\mathfrak{u}(\xi) d\xi\right)^{\frac{1}{\gamma_3}}d\theta
\end{align}
for $\delta, \gamma_1, \gamma_2, \gamma_3 \in\mathbb{R}_+,$ with  $\gamma_1\geq \gamma_2\geq 1, \gamma_3\geq 1$ then
	\begin{align}\label{3e9}
		\mathfrak{u}(\delta)&\leq \delta~\zeta_2+\frac{\zeta_1}{\zeta_3}~\delta \Biggl((\zeta_3~a(0)+\zeta_4)\exp\left(\int_0^{f(\delta)} (f^{-1}\theta)~\zeta_1~\Psi_1(\theta) +\zeta_3~\zeta_5~ \Psi_2(\theta)+\frac{\zeta_1}{\zeta_3}~ (f^{-1}\theta)\Psi_3(\theta) d\theta \right)\nonumber\\
		&\qquad+\int_0^{f(\delta)}\left(\zeta_3a'(f^{-1}(\xi))+\zeta_2\zeta_3\Psi_1(\xi)(f^{-1}(\xi))+\zeta_6~ \Psi_2(\xi)+(f^{-1}\xi)~\zeta_2~\Psi_3(\xi)\right)\nonumber\\
		&\qquad\qquad\times \exp\left(\int_\xi^{f(\delta)} (f^{-1}\theta)~\zeta_1~\Psi_1(\theta) +\zeta_3~\zeta_5~ \Psi_2(\theta)+\frac{\zeta_1}{\zeta_3}~ (f^{-1}\theta)\Psi_3(\theta) d\theta \right)d\xi\Biggr),
	\end{align}
	where $\zeta_1=\frac{1}{\gamma_1}\kappa^{\frac{1-\gamma_1}{\gamma_1}},  \zeta_2=\frac{\gamma_1-1}{\gamma_1}\kappa^{\frac{1}{\gamma_1}},  \zeta_3=\frac{\gamma_2}{\gamma_1}\kappa^{\frac{\gamma_2-\gamma_1}{\gamma_1}},  \zeta_4=\frac{\gamma_1-\gamma_2}{\gamma_1}\kappa^{\frac{\gamma_2}{\gamma_1}},  \zeta_5=\frac{1}{\gamma_3}\kappa^{\frac{1-\gamma_3}{\gamma_3}},  \zeta_6=\frac{\gamma_3-1}{\gamma_1}\kappa^{\frac{1}{\gamma_3}}$ ($\kappa>0$).
\end{thm}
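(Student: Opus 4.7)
The plan is to adapt the proof architecture of Theorem~\ref{t2} to the present setting, where the novelty is that the nonlinear non-local term now involves $(\mathfrak{u}'(\theta))^{\gamma_2}$ instead of $\mathfrak{u}^{\gamma_2}(\theta)$ and carries the outer exponent $1/\gamma_3$. First, I would denote the right-hand side of \eqref{3e1} by $\mathfrak{v}(\delta)$; it is nondecreasing with $\mathfrak{v}(0)=a(0)\geq 1$ and $(\mathfrak{u}'(\delta))^{\gamma_1}\leq \mathfrak{v}(\delta)$. Applying Lemma~\ref{i3} to $\mathfrak{v}^{1/\gamma_1}(\delta)$ gives $\mathfrak{u}'(\delta)\leq \zeta_1\mathfrak{v}(\delta)+\zeta_2$, hence by monotonicity $\mathfrak{u}(\delta)\leq \delta(\zeta_1\mathfrak{v}(\delta)+\zeta_2)$. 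A second application of Lemma~\ref{i3} with exponent $\gamma_2/\gamma_1\leq 1$ produces $(\mathfrak{u}'(\delta))^{\gamma_2}\leq \mathfrak{v}^{\gamma_2/\gamma_1}(\delta)\leq \zeta_3\mathfrak{v}(\delta)+\zeta_4$.

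Second, to collapse the compound quantity appearing inside the $1/\gamma_3$-power into a single auxiliary function, I would set
\[
  \mathfrak{z}(\delta)=\zeta_3\mathfrak{v}(\delta)+\zeta_4+\int_0^{f(\delta)}\Psi_3(\xi)\bigl(\xi\zeta_1\mathfrak{v}(\xi)+\xi\zeta_2\bigr)\,d\xi,
\]
so that $\mathfrak{z}(0)=\zeta_3 a(0)+\zeta_4$ and, by construction together with the previous bounds, the bracket in \eqref{3e1} is dominated by $\mathfrak{z}(\delta)$. A third application of Lemma~\ref{i3} (now with exponent $1/\gamma_3\leq 1$) then linearizes the outer power as $(\cdot)^{1/\gamma_3}\leq \zeta_5\mathfrak{z}(\delta)+\zeta_6$. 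Differentiating $\mathfrak{z}$, inserting all three bounds, and eliminating every remaining $\mathfrak{v}(\delta)$ via $\mathfrak{v}(\delta)\leq \mathfrak{z}(\delta)/\zeta_3$, should deliver a linear first-order differential inequality of the form $\mathfrak{z}'(\delta)\leq P(\delta)\mathfrak{z}(\delta)+Q(\delta)$, where $P$ collects the three coefficients multiplying $\mathfrak{z}$ (one each from $\Psi_1,\Psi_2,\Psi_3$) and $Q$ gathers the four source contributions (one of which is $\zeta_3 a'(\delta)$).

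Third, I would solve this inequality by the classical integrating-factor method, obtaining
\[
  \mathfrak{z}(\delta)\leq (\zeta_3 a(0)+\zeta_4)\exp\!\Bigl(\int_0^\delta P(s)\,ds\Bigr)+\int_0^\delta Q(s)\exp\!\Bigl(\int_s^\delta P(\tau)\,d\tau\Bigr)ds,
\]
and then perform the substitution $\theta=f(s)$, so that $s=f^{-1}(\theta)$ and $f'(s)\,ds=d\theta$, to recast both $\int_0^\delta P$ and $\int_s^\delta P$ as $\theta$-integrals over $[0,f(\delta)]$ and $[\xi,f(\delta)]$; this is precisely the step that manufactures the $f^{-1}(\theta)$ factors visible in \eqref{3e9}. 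Combining the resulting bound on $\mathfrak{z}(\delta)$ with $\mathfrak{u}(\delta)\leq \delta\zeta_2+(\delta\zeta_1/\zeta_3)\mathfrak{z}(\delta)$ then yields exactly \eqref{3e9}.

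The main obstacle is book-keeping rather than conceptual: one must correctly propagate the six constants $\zeta_1,\dots,\zeta_6$ through three successive applications of Zhao's inequality and cleanly separate the $\mathfrak{z}$-coefficient terms from the source terms after differentiating $\mathfrak{z}$. A subtler technical point will arise during the change of variable $\theta=f(s)$: terms in $Q$ that already carry an explicit $f'(s)$ factor transform without a Jacobian, whereas the isolated $\zeta_3 a'(s)$ contribution would in principle acquire a factor $1/f'(f^{-1}(\xi))$; aligning the final expression with the coefficients stated in \eqref{3e9} may therefore require either an implicit monotonicity assumption on $f'$ or a minor reinterpretation of that one coefficient.
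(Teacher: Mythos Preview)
Your proposal is correct and follows essentially the same route as the paper: define $\mathfrak{v}$ as the right side of \eqref{3e1}, apply Lemma~\ref{i3} three times (for exponents $1/\gamma_1$, $\gamma_2/\gamma_1$, and $1/\gamma_3$), introduce exactly the same auxiliary $\mathfrak{z}(\delta)=\zeta_3\mathfrak{v}(\delta)+\zeta_4+\int_0^{f(\delta)}\Psi_3(\xi)(\xi\zeta_1\mathfrak{v}(\xi)+\xi\zeta_2)\,d\xi$, reduce to a linear differential inequality for $\mathfrak{z}$, and integrate with the substitution $\theta=f(s)$. Your closing remark about the missing Jacobian on the $\zeta_3 a'$ term is astute; the paper handles this identically, writing $\zeta_3 a'(f^{-1}(\xi))$ in \eqref{3e8} without the factor $1/f'(f^{-1}(\xi))$, so your alignment concern is with the statement itself rather than with your argument.
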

\begin{proof}
	If the right-hand-side of inequality \eqref{2e1} is substituted as $\mathfrak{v}(\delta)$  then $\mathfrak{v}(0)=a(0)$ and thus from lemma \ref{i3}
	\begin{align}\label{3e3}
		\mathfrak{u}'(\delta)\leq \mathfrak{v}^{\frac{1}{\gamma_1}}(\delta)\leq \zeta_1 \mathfrak{v}(\delta)+\zeta_2.
	\end{align}
However, the nondecreasing nature of $\mathfrak{v}(\delta)\geq 0$ gives
	\begin{align}\label{3e4}
		\mathfrak{u}(\delta)\leq \delta~\zeta_1 \mathfrak{v}(\delta)+\delta~\zeta_2.
	\end{align}
	Using \eqref{3e4}, we have
	\begin{align}\label{3e5}
		\mathfrak{v}'(\delta)&=a'(\delta)+ f'(\delta)\Psi_1(f(\delta))\mathfrak{u}(f(\delta)) + f'(\delta)\Psi_2(f(\delta))\left((\mathfrak{u}'(f(\delta)))^{\gamma_2}+\int\limits_{0}^{f(\delta)} \Psi_3(\xi)\mathfrak{u}(\xi) d\xi\right)^{\frac{1}{\gamma_3}}\nonumber\\
		&~\leq a'(\delta)+ f'(\delta)\Psi_1(f(\delta))(\delta~\zeta_1 \mathfrak{v}(\delta)+\delta~\zeta_2) + f'(\delta) \Psi_2(f(\delta))\nonumber\\
		&\qquad\qquad\qquad\qquad\times\Biggl( \mathfrak{v}^{\frac{\gamma_2}{\gamma_1}}(\delta)+\int\limits_{0}^{f(\delta)} \Psi_3(\xi)(\xi~\zeta_1 \mathfrak{v}(\xi)+\xi~\zeta_2) d\xi\Biggr)^{\frac{1}{\gamma_3}}\nonumber\\
		&~\leq a'(\delta)+f'(\delta)\Psi_1(f(\delta))\delta~\zeta_1 \mathfrak{v}(\delta)+f'(\delta)\Psi_1(f(\delta))\delta~\zeta_2+f'(\delta) \Psi_2(f(\delta))\mathfrak{z}^{\frac{1}{\gamma_3}}(\delta),
	\end{align}
where $\zeta_3=\frac{\gamma_2}{\gamma_1}\kappa^{\frac{\gamma_2-\gamma_1}{\gamma_1}}, \zeta_4=\frac{\gamma_1-\gamma_2}{\gamma_1}\kappa^{\frac{\gamma_2}{\gamma_1}}$ ($\kappa>0$) and 
\begin{align}\label{3e6}
\mathfrak{z}(\delta)= \zeta_3 \mathfrak{v}(\delta)+\zeta_4+\int\limits_{0}^{f(\delta)} \Psi_3(\xi)(\xi~\zeta_1 \mathfrak{v}(\xi)+\xi~\zeta_2) d\xi.
\end{align} 
Because $\mathfrak{u}(\delta)\leq \delta~\zeta_1 \mathfrak{v}(\delta)+\delta~\zeta_2\leq \delta~\frac{\zeta_1}{\zeta_3} \mathfrak{z}(\delta)+\delta~{\zeta_2}$ according to \eqref{3e6} and also $\mathfrak{z}^{\frac{1}{\gamma_3}}(\delta)\leq \zeta_5~\mathfrak{z}(\delta)+\zeta_6$, thus
\begin{align}\label{3e7}
\mathfrak{z}'(\delta)&= \zeta_3 \mathfrak{v}'(\delta)+f'(\delta)\Psi_3(f(\delta))(f(\delta)~\zeta_1 \mathfrak{v}(f(\delta))+f(\delta)~\zeta_2)\nonumber\\
&~\leq \zeta_3\Big(a'(\delta)+f'(\delta)\Psi_1(f(\delta))\delta~\zeta_1 \mathfrak{v}(\delta)+f'(\delta)\Psi_1(f(\delta))\delta~\zeta_2+f'(\delta) \Psi_2(f(\delta))\mathfrak{z}^{\frac{1}{\gamma_3}}(\delta)\Big)\nonumber\\
&\qquad+f'(\delta)\Psi_3(f(\delta)) \delta~\zeta_1 \mathfrak{v}(\delta)+f'(\delta)\Psi_3(f(\delta))\delta~\zeta_2\nonumber\\
&=\Big(\zeta_3a'(\delta)+\zeta_2\zeta_3f'(\delta)\Psi_1(f(\delta))\delta+\zeta_6~f'(\delta) \Psi_2(f(\delta))+\delta~\zeta_2~f'(\delta)\Psi_3(f(\delta))\Big)\nonumber\\
&\qquad+\Big(\delta~\zeta_1~f'(\delta)\Psi_1(f(\delta)) +\zeta_3\zeta_5~f'(\delta) \Psi_2(f(\delta))+\frac{\zeta_1}{\zeta_3}~ \delta~f'(\delta)\Psi_3(f(\delta)) \Big)\mathfrak{z}(\delta).
\end{align} 
where $\zeta_5=\frac{1}{\gamma_3}\kappa^{\frac{1-\gamma_3}{\gamma_3}}, \zeta_6=\frac{\gamma_3-1}{\gamma_1}\kappa^{\frac{1}{\gamma_3}}$, for any $\kappa>0$. Integrating inequality \eqref{3e7} from $0$ to $\delta,$ consequently, 
\begin{align}\label{3e8}
	\mathfrak{z}(\delta)&\leq (\zeta_3~a(0)+\zeta_4)\exp\left(\int_0^{f(\delta)} (f^{-1}\theta)~\zeta_1~\Psi_1(\theta) +\zeta_3~\zeta_5~ \Psi_2(\theta)+\frac{\zeta_1}{\zeta_3}~ (f^{-1}\theta)\Psi_3(\theta) d\theta \right)\nonumber\\
	&\qquad+\int_0^{f(\delta)}\left(\zeta_3a'(f^{-1}\xi)+\zeta_2\zeta_3\Psi_1(\xi)(f^{-1}\xi)+\zeta_6~ \Psi_2(\xi)+(f^{-1}\xi)~\zeta_2~\Psi_3(\xi)\right)\nonumber\\
	&\qquad\qquad\times \exp\left(\int_\xi^{f(\delta)} (f^{-1}\theta)~\zeta_1~\Psi_1(\theta) +\zeta_3~\zeta_5~ \Psi_2(\theta)+\frac{\zeta_1}{\zeta_3}~ (f^{-1}\theta)\Psi_3(\theta) d\theta \right)d\xi.
\end{align}
Combining the bound obtained on $\mathfrak{z}(\delta)$ in \eqref{3e8} with \eqref{3e4} and using $\mathfrak{v}(\delta)\leq \frac{1}{\zeta_3}~\mathfrak{z}(\delta),$ we arrive at the bound in \eqref{3e9}.
\end{proof}

\begin{rem}
	The integro-differential inequality of A Shakoor et al. \cite{Shakoor2023} can be produced by allowing $\gamma_1=\gamma_2=1$ and $\gamma_3=p.$
\end{rem}

\begin{thm}\label{t1}
			If  $\mathfrak{u}, \mathfrak{u}', \Psi_1, \Psi_2, \Psi_3\in \textnormal{Cf}_{\mathbb{R}_+}$ and $a, f\in  \textnormal{Cdf}_{\mathbb{R}_{+}}$ are nondecreasing in nature wherein $a(\delta)\geq 1, f(\delta)\leq \delta ~ (\delta\in \mathbb{R}_+)$ are such that 
\begin{align}\label{e1}
	\mathfrak{u}^{\gamma_1}(\delta)\leq \left(a(\delta)+\int\limits_{0}^{f(\delta)} \Psi_1(\theta)\mathfrak{u}(\theta) d\theta+\int\limits_{0}^{f(\delta)} \Psi_2(\theta)\left(\mathfrak{u}^{\gamma_2}(\theta)+\int\limits_{0}^{\theta} \Psi_3(\xi)\mathfrak{u}^{\gamma_3}(\xi) d\xi\right)^{\frac{1}{\gamma_2}}d\theta\right)^{\gamma_4}
\end{align}
	for  $\delta, \gamma_1, \gamma_2, \gamma_3, \gamma_4\in\mathbb{R}_+$ with $\gamma_1\geq \gamma_4 > 0, \gamma_2>\gamma_3\geq 0$ then
\begin{align}\label{e18}
	\mathfrak{u}(\delta)&\leq \Biggl\{(\zeta_7+\zeta_8a)^{\gamma_2-\gamma_3}(0) \exp\left(\int_0^{f(\delta)} \zeta_8[\gamma_2-\gamma_3]\Big(a'(f^{-1}\theta)+ \Psi_1(\theta)+ \Psi_2(\theta)\Big)d\theta\right)\nonumber\\
	&~+\int_0^{f(\delta)} \frac{\gamma_2-\gamma_3}{\gamma_2}\Psi_3(\xi)\exp\left(\int_0^{f(\delta)} \zeta_8[\gamma_2-\gamma_3]\Big(a'(f^{-1}\theta)+ \Psi_1(\theta)+ \Psi_2(\theta)\Big)d\theta\right)d\xi\Biggr\}^{\frac{1}{\gamma_2-\gamma_3}},
\end{align}
where $\zeta_7=\frac{\gamma_1-\gamma_4}{\gamma_1}\kappa^{\frac{\gamma_4}{\gamma_1}}$ and $\zeta_8=\frac{\gamma_4}{\gamma_1}\kappa^{\frac{\gamma_4-\gamma_1}{\gamma_1}} $ ($\kappa>0$).
\end{thm}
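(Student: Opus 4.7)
The plan is to mimic the strategy used in Theorems~\ref{t2} and~\ref{t3}: linearize the outer power by Zhao's lemma, differentiate the right-hand side, and then deal with the inner square-bracket by a Bernoulli-type substitution that collapses the nested-integral nonlinearity into a linear Gronwall inequality.

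\textbf{Step 1 (linearizing the outer power).} Denote by $\mathfrak{v}(\delta)$ the quantity inside the $(\cdot)^{\gamma_4}$ on the right-hand side of \eqref{e1}. Then $\mathfrak{v}$ is nonnegative, nondecreasing, continuously differentiable, with $\mathfrak{v}(0)=a(0)\ge 1$, and
\[
\mathfrak{u}^{\gamma_1}(\delta)\le \mathfrak{v}^{\gamma_4}(\delta)\quad\Longrightarrow\quad \mathfrak{u}(\delta)\le \mathfrak{v}^{\gamma_4/\gamma_1}(\delta).
\]
Since $\gamma_1\ge \gamma_4>0$, I apply Lemma~\ref{i3} with exponents $\gamma_4,\gamma_1$ to get the pointwise linear bound $\mathfrak{u}(\delta)\le \zeta_8\mathfrak{v}(\delta)+\zeta_7$, with $\zeta_7,\zeta_8$ as defined in the statement.

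\textbf{Step 2 (differentiating and re-inserting).} Differentiating $\mathfrak{v}$ yields
\[
\mathfrak{v}'(\delta)=a'(\delta)+f'(\delta)\Psi_1(f(\delta))\mathfrak{u}(f(\delta))+f'(\delta)\Psi_2(f(\delta))\,\mathfrak{y}^{1/\gamma_2}(\delta),
\]
where $\mathfrak{y}(\delta):=\mathfrak{u}^{\gamma_2}(f(\delta))+\int_{0}^{f(\delta)}\Psi_3(\xi)\mathfrak{u}^{\gamma_3}(\xi)\,d\xi$. Using $f(\delta)\le \delta$ together with the monotonicity of $\mathfrak{v}$, I replace $\mathfrak{u}(f(\delta))$ by $\zeta_7+\zeta_8\mathfrak{v}(\delta)$.

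\textbf{Step 3 (the main obstacle: the nested $(\cdot)^{1/\gamma_2}$ term).} The difficulty is that $\mathfrak{y}$ mixes two powers $\gamma_2>\gamma_3$ of $\mathfrak{u}$ and then appears to the $1/\gamma_2$ power. My plan is to introduce an auxiliary function $\mathfrak{w}$ such that $\mathfrak{y}^{1/\gamma_2}$ is controlled by $\mathfrak{w}$ and then linearize via Bernoulli. Concretely, set
\[
\mathfrak{w}(\delta):=\zeta_7+\zeta_8\mathfrak{v}(\delta),
\]
so that $\mathfrak{u}^{\gamma_2}(f(\delta))\le \mathfrak{w}^{\gamma_2}(f(\delta))$ and $\mathfrak{u}^{\gamma_3}(\xi)\le \mathfrak{w}^{\gamma_3}(\xi)$. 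Substituting these in $\mathfrak{y}$ and estimating $\mathfrak{y}^{1/\gamma_2}$ crudely (by splitting the two summands and extracting leading $\mathfrak{w}$ factors, analogous to how Theorem~\ref{t2} uses Lemma~\ref{i2}) gives a differential inequality
\[
\mathfrak{w}'(\delta)\le P(\delta)\,\mathfrak{w}(\delta)+Q(\delta)\,\mathfrak{w}^{1-(\gamma_2-\gamma_3)}(\delta)+R(\delta),
\]
with $P,Q,R$ built from $a'$, $\Psi_1,\Psi_2,\Psi_3$, $f'$, $\zeta_7,\zeta_8$. The Bernoulli substitution $\mathfrak{z}(\delta)=\mathfrak{w}^{\gamma_2-\gamma_3}(\delta)$, valid because $\gamma_2>\gamma_3$, converts this into a linear first-order inequality in $\mathfrak{z}$.

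\textbf{Step 4 (Gronwall and change of variables).} A direct integration of the linear inequality for $\mathfrak{z}$, together with the change of variables $\theta\mapsto f^{-1}(\theta)$ in $\int_0^{\delta}(\cdot)\,f'(\delta)\,d\delta$ (which replaces the $f^{-1}(\theta)$ factor in $a'(f^{-1}(\theta))$ but cancels the Jacobian elsewhere, matching the simpler form seen in~\eqref{e18}), yields
\[
\mathfrak{z}(\delta)\le \mathfrak{z}(0)\exp\!\left(\int_0^{f(\delta)}\!\!\!\cdots\,d\theta\right)+\int_0^{f(\delta)}\!\frac{\gamma_2-\gamma_3}{\gamma_2}\Psi_3(\xi)\exp\!\left(\int_\xi^{f(\delta)}\!\!\!\cdots\,d\theta\right)d\xi,
\]
with $\mathfrak{z}(0)=(\zeta_7+\zeta_8 a(0))^{\gamma_2-\gamma_3}$. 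Raising to the $1/(\gamma_2-\gamma_3)$ power returns $\mathfrak{w}$, and combining with $\mathfrak{u}\le \mathfrak{w}$ delivers exactly \eqref{e18}. The delicate part will be arranging the bookkeeping in Step~3 so that the coefficients in the exponent come out cleanly as $a'(f^{-1}\theta)+\Psi_1(\theta)+\Psi_2(\theta)$ multiplied by $\zeta_8[\gamma_2-\gamma_3]$; this will require using the hypothesis $a(\delta)\ge 1$ to absorb stray constants into the $a'$-integral, and using the monotonicity of $f^{-1}$ to discard lower-order factors.
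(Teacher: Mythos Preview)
Your skeleton (Zhao linearization $\to$ differentiate $\to$ Bernoulli substitution $\to$ linear Gronwall) is exactly the paper's route, and Steps~1, 2 and 4 line up with the paper's argument almost verbatim; in particular your $\mathfrak{w}=\zeta_7+\zeta_8\mathfrak{v}$ is precisely the function the paper calls $\mathfrak{v}$.

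The gap is in Step~3. Your choice of auxiliary $\mathfrak{w}:=\zeta_7+\zeta_8\mathfrak{v}$ does \emph{not} produce a differential inequality of the form $\mathfrak{w}'\le P\mathfrak{w}+Q\,\mathfrak{w}^{1-(\gamma_2-\gamma_3)}+R$. After the substitution $\mathfrak{u}\le\mathfrak{w}$ you still carry
\[
\mathfrak{y}^{1/\gamma_2}\ \le\ \Bigl(\mathfrak{w}^{\gamma_2}(\delta)+\int_0^{f(\delta)}\Psi_3(\xi)\,\mathfrak{w}^{\gamma_3}(\xi)\,d\xi\Bigr)^{1/\gamma_2},
\]
and no ``crude splitting'' of the two summands turns the second one into a \emph{pointwise} power of $\mathfrak{w}(\delta)$: the quantity $\bigl(\int_0^{f(\delta)}\Psi_3\mathfrak{w}^{\gamma_3}\bigr)^{1/\gamma_2}$ is an integral functional, not $Q(\delta)\mathfrak{w}^{1-(\gamma_2-\gamma_3)}(\delta)$. (The analogy with Theorem~\ref{t2} is misleading: there Lemma~\ref{i2} is used to expand $(\delta\zeta_1\mathfrak{v}+\delta\zeta_2)^{\gamma_2}$, and the $\Psi_3$-integral is \emph{absorbed into} the auxiliary, not split off.)

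What the paper does instead, and what repairs your Step~3, is to introduce one further envelope
\[
\mathfrak{W}(\delta):=\Bigl(\mathfrak{w}^{\gamma_2}(\delta)+\int_0^{f(\delta)}\Psi_3(\xi)\,\mathfrak{w}^{\gamma_3}(\xi)\,d\xi\Bigr)^{1/\gamma_2},
\]
so that $\mathfrak{y}^{1/\gamma_2}\le\mathfrak{W}$ and $\mathfrak{w}\le\mathfrak{W}$. Differentiating $\mathfrak{W}^{\gamma_2}$ (not $\mathfrak{W}$) and dividing by $\gamma_2\,\mathfrak{W}^{\gamma_2-1}$ then gives exactly
\[
\mathfrak{W}'\ \le\ \zeta_8\bigl(a'+f'\Psi_1(f)+f'\Psi_2(f)\bigr)\mathfrak{W}\ +\ \tfrac{1}{\gamma_2}f'\Psi_3(f)\,\mathfrak{W}^{\,\gamma_3-\gamma_2+1},
\]
where the hypothesis $a(\delta)\ge1$ is used to get $\mathfrak{W}\ge1$ and hence $\mathfrak{W}^{-1}\le1$ when handling the $a'$ term. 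Now the Bernoulli substitution $\mathfrak{z}=\mathfrak{W}^{\gamma_2-\gamma_3}$ works and delivers \eqref{e18} with $\Psi_3$ appearing only as the inhomogeneous source, as stated. In short: you need one more layer of auxiliary function; the rest of your plan is correct.
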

\begin{proof}
The inequality \eqref{e1} can be rephrased to the form,
\begin{align}\label{e2}
	\mathfrak{u}(\delta)&\leq \left(a(\delta)+\int\limits_{0}^{f(\delta)} \Psi_1(\theta)\mathfrak{u}(\theta) d\theta+\int\limits_{0}^{f(\delta)} \Psi_2(\theta)\left(\mathfrak{u}^{\gamma_2}(\theta)+\int\limits_{0}^{\theta} \Psi_3(\xi)\mathfrak{u}^{\gamma_3}(\xi) d\xi\right)^{\frac{1}{\gamma_2}}d\theta\right)^{\frac{\gamma_4}{\gamma_1}}\nonumber\\
	&\leq \zeta_7+\zeta_8a(\delta)+\int\limits_{0}^{f(\delta)} \zeta_8\Psi_1(\theta)\mathfrak{u}(\theta) d\theta+\int\limits_{0}^{f(\delta)} \zeta_8\Psi_2(\theta)\left(\mathfrak{u}^{\gamma_2}(\theta)+\int\limits_{0}^{\theta} \Psi_3(\xi)\mathfrak{u}^{\gamma_3}(\xi) d\xi\right)^{\frac{1}{\gamma_2}}d\theta,
\end{align}
where $\zeta_7=\frac{\gamma_1-\gamma_4}{\gamma_1}\kappa^{\frac{\gamma_4}{\gamma_1}}$ and $\zeta_8=\frac{\gamma_4}{\gamma_1}\kappa^{\frac{\gamma_4-\gamma_1}{\gamma_1}}$  ($\kappa>0$).
If  $\mathfrak{v}(\delta)$  indicates right-hand-side of inequality \eqref{e2}, then $\mathfrak{u}(\delta)\leq \mathfrak{v}(\delta)$ with $\mathfrak{v}(0)=\zeta_7+\zeta_8a(0),$ and thus $	\mathfrak{u}(f(\delta))\leq \mathfrak{v}(f(\delta))\leq \mathfrak{v}(\delta)$ due to nondecreasing nature of $\mathfrak{v}(\delta)$. Further,
\begin{align}\label{e6}
	\mathfrak{v}'(\delta)&=\zeta_8a'(\delta)+ \zeta_8f'(\delta)\Psi_1(f(\delta))\mathfrak{u}(f(\delta)) + f'(\delta)\zeta_8\Psi_2(f(\delta))\left(\mathfrak{u}^{\gamma_2}(f(\delta))+\int\limits_{0}^{f(\delta)} \Psi_3(\xi)\mathfrak{u}^{\gamma_3}(\xi) d\xi\right)^{\frac{1}{\gamma_2}}\nonumber\\
	&\leq \zeta_8a'(\delta)+ \zeta_8f'(\delta)\Psi_1(f(\delta))\mathfrak{v}(\delta) + \zeta_8f'(\delta)\Psi_2(f(\delta))\left(\mathfrak{v}^{\gamma_2}(\delta)+\int\limits_{0}^{f(\delta)} \Psi_3(\xi)\mathfrak{v}^{\gamma_3}(\xi) d\xi\right)^{\frac{1}{\gamma_2}}\nonumber\\
	&\leq \zeta_8a'(\delta)+ \zeta_8f'(\delta)\Psi_1(f(\delta))\mathfrak{v}(\delta) + \zeta_8f'(\delta)\Psi_2(f(\delta))\mathfrak{w}(\delta),
\end{align}
where
\begin{align}\label{e7}
\mathfrak{w}(\delta)=\left(\mathfrak{v}^{\gamma_2}(\delta)+\int\limits_{0}^{f(\delta)} \Psi_3(\xi)\mathfrak{v}^{\gamma_3}(\xi) d\xi\right)^{\frac{1}{\gamma_2}}~~\textnormal{i.e.}~~\mathfrak{w}^{\gamma_2}(\delta)=\mathfrak{v}^{\gamma_2}(\delta)+\int\limits_{0}^{f(\delta)} \Psi_3(\xi)\mathfrak{v}^{\gamma_3}(\xi) d\xi.
\end{align}
The equation \eqref{e7} provides that $\mathfrak{w}(0)=\mathfrak{v}(0)=\zeta_7+\zeta_8a(0), \mathfrak{v}(\delta)\leq \mathfrak{w}(\delta),$ and therein
\begin{align}\label{e10}
\gamma_2\mathfrak{w}^{\gamma_2-1}(\delta)\mathfrak{w}'(\delta)&=\gamma_2	\mathfrak{v}^{\gamma_2-1}(\delta)\mathfrak{v}'(\delta)+f'(\delta)\Psi_3(f(\delta))\mathfrak{v}^{\gamma_3}(f(\delta))\nonumber\\
&~\leq\gamma_2	\mathfrak{w}^{\gamma_2-1}(\delta)\mathfrak{v}'(\delta)+f'(\delta)\Psi_3(f(\delta))\mathfrak{w}^{\gamma_3}(f(\delta))\nonumber\\
&~\leq \gamma_2	\mathfrak{w}^{\gamma_2-1}(\delta)\Big(\zeta_8a'(\delta)+ \zeta_8f'(\delta)\Psi_1(f(\delta))\mathfrak{v}(\delta) + \zeta_8f'(\delta)\Psi_2(f(\delta))\mathfrak{w}(\delta)\Big)\nonumber\\
&\qquad+f'(\delta)\Psi_3(f(\delta))\mathfrak{w}^{\gamma_3}(f(\delta)).
\end{align}
Nextly, division of both sides of \eqref{e10} by $\gamma_2	\mathfrak{w}^{\gamma_2-1}(\delta)$ provides the estimate
\begin{align}\label{e11}
	\mathfrak{w}'(\delta)&\leq \zeta_8a'(\delta)+ \zeta_8f'(\delta)\Psi_1(f(\delta))\mathfrak{v}(\delta) + \zeta_8f'(\delta)\Psi_2(f(\delta))\mathfrak{w}(\delta)\nonumber\\
	&\qquad\qquad\qquad\qquad\qquad\qquad+\frac{1}{\gamma_2}f'(\delta)\Psi_3(f(\delta))\mathfrak{w}^{\gamma_3-\gamma_2+1}(\delta).
\end{align}
Consider $\mathfrak{z}(\delta)=\mathfrak{w}^{{\gamma_2-\gamma_3}}(\delta),$ so $\mathfrak{z}(0)=(\zeta_7+\zeta_8a(0))^{\gamma_2-\gamma_3}(0)$ and
\begin{align}\label{e12}
	\mathfrak{w}'(\delta)=\frac{1}{\gamma_2-\gamma_3}\mathfrak{w}^{1+\gamma_3-\gamma_2}(\delta)\mathfrak{z}'(\delta)
\end{align}
Combining \eqref{e11} and \eqref{e12}, we obtain
\begin{align}\label{e13}
\frac{1}{\gamma_2-\gamma_3}\mathfrak{w}^{1+\gamma_3-\gamma_2}(\delta)\mathfrak{z}'(\delta)&\leq \zeta_8a'(\delta)+ \zeta_8f'(\delta)\Psi_1(f(\delta))\mathfrak{w}(\delta) + \zeta_8f'(\delta)\Psi_2(f(\delta))\mathfrak{w}(\delta)\nonumber\\
&\qquad+\frac{1}{\gamma_2}f'(\delta)\Psi_3(f(\delta))\mathfrak{w}^{\gamma_3-\gamma_2+1}(\delta).
\end{align}
Further, utilizing $\mathfrak{w}(\delta)\geq 1$ as $a(\delta)\geq 1,$ a division of inequality \eqref{e13} by $\mathfrak{w}^{1+\gamma_3-\gamma_2}(\delta)$ gives
\begin{align}\label{e14}
	\frac{1}{\gamma_2-\gamma_3}\mathfrak{z}'(\delta)&\leq \zeta_8a'(\delta)\mathfrak{w}^{-1-\gamma_3+\gamma_2}(\delta)+ \zeta_8f'(\delta)\Psi_1(f(\delta))\mathfrak{w}^{-\gamma_3+\gamma_2}(\delta) \nonumber\\
	&\qquad+ \zeta_8f'(\delta)\Psi_2(f(\delta))\mathfrak{w}^{-\gamma_3+\gamma_2}(\delta)+\frac{1}{\gamma_2}f'(\delta)\Psi_3(f(\delta))\nonumber\\
	&=\zeta_8a'(\delta)\mathfrak{w}^{-1}(\delta)\mathfrak{w}^{-\gamma_3+\gamma_2}(\delta)+ \zeta_8f'(\delta)\Psi_1(f(\delta))\mathfrak{w}^{-\gamma_3+\gamma_2}(\delta) \nonumber\\
	&\qquad+ \zeta_8f'(\delta)\Psi_2(f(\delta))\mathfrak{w}^{-\gamma_3+\gamma_2}(\delta)+\frac{1}{\gamma_2}f'(\delta)\Psi_3(f(\delta))\nonumber\\
	&= \zeta_8a'(\delta)\mathfrak{w}^{-1}(\delta)\mathfrak{z}(\delta)+ \zeta_8f'(\delta)\Psi_1(f(\delta))\mathfrak{z}(\delta) \nonumber\\
	&\qquad+ \zeta_8f'(\delta)\Psi_2(f(\delta))\mathfrak{z}(\delta)+\frac{1}{\gamma_2}f'(\delta)\Psi_3(f(\delta))\nonumber\\
	&\leq \zeta_8\Big(a'(\delta)+ f'(\delta)\Psi_1(f(\delta))+f'(\delta)\Psi_2(f(\delta))\Big)\mathfrak{z}(\delta)+\frac{1}{\gamma_2}f'(\delta)\Psi_3(f(\delta))
\end{align}
Integrating inequality \eqref{e14} from $0$ to $\delta,$ we find that
\begin{align}\label{e16}
	\mathfrak{z}(\delta)&\leq (\zeta_7+\zeta_8a(0))^{\gamma_2-\gamma_3}(0) \exp\left(\int_0^{f(\delta)} \zeta_8[\gamma_2-\gamma_3]\Big(a'(f^{-1}\sigma)+ \Psi_1(\sigma)+ \Psi_2(\sigma)\Big)d\sigma\right)\nonumber\\
	&\qquad+\int_0^{f(\delta)} \frac{\gamma_2-\gamma_3}{\gamma_2}\Psi_3(\lambda)\exp\left(\int_0^{f(\delta)} \zeta_8[\gamma_2-\gamma_3]\Big(a'(f^{-1}\sigma)+ \Psi_1(\sigma)+ \Psi_2(\sigma)\Big)d\sigma\right)d\lambda.
\end{align}
Thus from \eqref{e16}, $\mathfrak{w}(\delta)\geq \mathfrak{v}(\delta)\geq \mathfrak{u}(\delta),$ and using definition of $\mathfrak{z}(\delta),$ we achieve the bound as stated in \eqref{e18}.
\end{proof}

\begin{rem}
	Through alteration in the initial assumptions of the Theorem \ref{t1}, we come up with the following widely recognized inequalities.
	\begin{enumerate}
		\item If we set $\gamma_1=1=\gamma_4$, we retrieve the most latest nonlinear retarded integral inequality developed by A Shaknoor et al. (Theorem 2.1 \cite{Shakoor2023}).
		\item The renowned inequality of Gronwall and Bellman \cite{Pachpatte1998} can be acquired if we consider $a(\delta)=c$ for some $c\in\mathbb{R}_+, \Psi_2(\delta)=0, \gamma_1=1=\gamma_4$ and $\mathfrak{a}(\delta)=\delta.$
		\item If we set up the assumptions as $\gamma_1=1=\gamma_4, a(\delta)=c\in\mathbb{R}_+, \Psi_1(\delta)= 0,$ and $f(\delta)=\delta$ then inequality proved above shrinks to Theorem 2.3 \cite{Bainov1992}.
	\end{enumerate}
\end{rem}

\begin{thm}\label{t4}
	Consider $\mathfrak{u}, \Psi_1, \Psi_2, \Psi_3, \Psi_4, \Psi_5, \Psi_6\in \textnormal{Cf}_{\mathbb{R}_+},$ and let $a, f\in  \textnormal{Cdf}_{\mathbb{R}_{+}}$ be nondecreasing in nature wherein $a(\delta)\geq 1, f(\delta)\leq \delta (\delta\in \mathbb{R}_+)$ such that
	\begin{align}\label{4e1}
		\mathfrak{u}^{\gamma_1}(\delta)&\leq a(\delta)+\int\limits_{0}^{f(\delta)} (\Psi_1(\theta)\mathfrak{u}(\theta)+\Psi_2(\theta)) d\theta+\int\limits_{0}^{f(\delta)} \Biggl\{\Psi_3(\theta)\Biggl(\mathfrak{u}^{\gamma_1}(\theta)\nonumber\\
		&\qquad+\int\limits_{0}^{\theta} (\Psi_4(\xi)\mathfrak{u}^{\gamma_2}(\xi)+\Psi_5(\xi)) d\xi\Biggr)^{\frac{1}{\gamma_1}}+\Psi_6(\theta)\Biggr\}d\theta
	\end{align}
	for  $\delta, \gamma_1, \gamma_2\in\mathbb{R}_+$ with $\gamma_1\geq \gamma_2 \geq 1$ then
	\begin{align}\label{4e7}
		\mathfrak{u}(\delta)&\leq \Biggl\{a(0) \exp\Biggl(\int_0^{f(\delta)}\zeta_1~\Psi_1(\theta)+\zeta_1~\Psi_3(\theta)+\zeta_3~\Psi_4(\theta)d\theta\Biggr)+\int_0^{f(\delta)} \Big(a'(f^{-1}\xi)+\zeta_2~\Psi_1(\xi)\nonumber\\
		&\qquad+\Psi_2(\xi)+\zeta_2~\Psi_3(\xi)+\Psi_6(\xi)+\zeta_4\Psi_4(\xi)+\Psi_5(\xi)\Big)\times \exp\Biggl(\int_\xi^{f(\delta)}\zeta_1~\Psi_1(\theta)+\zeta_1~\Psi_3(\theta)\nonumber\\
		&\qquad+\zeta_3~\Psi_4(\theta)d\theta\Biggr)d\xi\Biggr\}^{\frac{1}{p}},
	\end{align}
where $\zeta_1, \zeta_2, \zeta_3, \zeta_4$ are as in Theorem \ref{t3}.
\end{thm}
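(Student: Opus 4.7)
The plan is to mirror the two-step scheme used in Theorems \ref{t2} and \ref{t3}: first linearize every nonlinear occurrence of $\mathfrak{u}$ via Lemma \ref{i3}, then absorb the inner double integral into a single auxiliary majorant whose derivative satisfies a linear first-order differential inequality, and finally integrate by the standard integrating-factor identity. I would begin by denoting the right-hand side of \eqref{4e1} by $\mathfrak{v}(\delta)$, so that $\mathfrak{u}^{\gamma_1}(\delta)\le \mathfrak{v}(\delta)$, $\mathfrak{v}(0)=a(0)$, and $\mathfrak{v}$ is nondecreasing. Applying Lemma \ref{i3} with exponents $1/\gamma_1$ and $\gamma_2/\gamma_1$ (which are permissible since $\gamma_1\ge\gamma_2\ge 1$) yields the pointwise majorizations $\mathfrak{u}(\delta)\le \zeta_1\mathfrak{v}(\delta)+\zeta_2$ and $\mathfrak{u}^{\gamma_2}(\delta)\le \zeta_3\mathfrak{v}(\delta)+\zeta_4$, which will be used to dispose of every super- or sub-linear term encountered below.

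Next, I would introduce the single auxiliary function
\[
\mathfrak{z}(\delta)=\mathfrak{v}(\delta)+\int_0^{f(\delta)}\bigl[\Psi_4(\xi)\mathfrak{u}^{\gamma_2}(\xi)+\Psi_5(\xi)\bigr]\,d\xi,
\]
so that $\mathfrak{z}(0)=a(0)$, $\mathfrak{v}(\delta)\le\mathfrak{z}(\delta)$, and, because $f(\delta)\le\delta$ with $\mathfrak{v}$ nondecreasing, the bracket inside the $1/\gamma_1$-power in \eqref{4e1} is bounded at $f(\delta)$ by $\mathfrak{z}(\delta)$. Lemma \ref{i3} then gives $\mathfrak{z}^{1/\gamma_1}(\delta)\le\zeta_1\mathfrak{z}(\delta)+\zeta_2$. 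Differentiating $\mathfrak{v}$ and $\mathfrak{z}$, substituting the linearizations from the previous step, and using $\mathfrak{v}\le\mathfrak{z}$ to collapse the two unknowns into one produces a scalar linear differential inequality
\[
\mathfrak{z}'(\delta)\le A(\delta)+B(\delta)\,\mathfrak{z}(\delta),
\]
in which $A(\delta)=a'(\delta)+f'(\delta)\bigl[\zeta_2\Psi_1(f(\delta))+\Psi_2(f(\delta))+\zeta_2\Psi_3(f(\delta))+\Psi_6(f(\delta))+\zeta_4\Psi_4(f(\delta))+\Psi_5(f(\delta))\bigr]$ and $B(\delta)=f'(\delta)\bigl[\zeta_1\Psi_1(f(\delta))+\zeta_1\Psi_3(f(\delta))+\zeta_3\Psi_4(f(\delta))\bigr]$.

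Finally, a standard integrating-factor integration from $0$ to $\delta$ with initial datum $\mathfrak{z}(0)=a(0)$ gives the explicit bound $\mathfrak{z}(\delta)\le a(0)\exp\!\bigl(\int_0^\delta B\bigr)+\int_0^\delta A(\tau)\exp\!\bigl(\int_\tau^\delta B\bigr)d\tau$; the change of variable $\theta=f(\tau)$ (legitimate because $f$ is continuously differentiable and nondecreasing, with the factor $f'(\tau)d\tau$ cancelling $f'$ inside $A$ and $B$) converts every inner integral to the retarded form $\int_0^{f(\delta)}(\cdots)d\theta$ used in \eqref{4e7}, and then $\mathfrak{u}(\delta)\le\mathfrak{v}^{1/\gamma_1}(\delta)\le\mathfrak{z}^{1/\gamma_1}(\delta)$ delivers the stated estimate (reading the exponent $1/p$ in \eqref{4e7} as $1/\gamma_1$). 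The main obstacle is purely bookkeeping: ensuring that each of the six nonlinear or mixed terms $\Psi_1\mathfrak{u},\ \Psi_2,\ \Psi_3(\,\cdot\,)^{1/\gamma_1},\ \Psi_4\mathfrak{u}^{\gamma_2},\ \Psi_5,\ \Psi_6$ is routed to the correct coefficient $A$ or $B$ after linearization, and verifying that no stray Jacobian factor survives the $\theta=f(\tau)$ substitution; no new analytic ingredient beyond Lemma \ref{i3} and the classical Gronwall argument is required.
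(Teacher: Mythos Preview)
Your proposal is correct and follows essentially the same route as the paper: define $\mathfrak{v}$ as the right-hand side of \eqref{4e1}, introduce the auxiliary majorant $\mathfrak{v}(\delta)+\int_0^{f(\delta)}(\Psi_4\mathfrak{u}^{\gamma_2}+\Psi_5)$ (the paper writes $\mathfrak{v}^{\gamma_2/\gamma_1}$ in place of $\mathfrak{u}^{\gamma_2}$, a cosmetic difference), linearize via Lemma~\ref{i3}, and integrate the resulting first-order inequality. The coefficients $A$, $B$ you obtain match the paper's, and your final change of variable $\theta=f(\tau)$ is exactly how the paper passes to the retarded form in \eqref{4e7}.
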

\begin{proof}
	If the right-hand-side of inequality \eqref{4e1} is substituted as $\mathfrak{v}(\delta)$  then $	\mathfrak{u}^{\gamma_1}(\delta)\leq\mathfrak{v}(\delta),~ \mathfrak{v}(0)=a(0)$, and so from lemma \ref{i3},
\begin{align}\label{4e3}
	\mathfrak{v}'(\delta)&=a'(\delta)+ f'(\delta)(\Psi_1(f(\delta))\mathfrak{u}(f(\delta))+\Psi_2(f(\delta)))+ f'(\delta)\Biggl\{\Psi_3(f(\delta))\nonumber\\
	&\qquad\qquad\qquad\Biggl(\mathfrak{u}^{\gamma_1}(f(\delta))+\int\limits_{0}^{f(\delta)} (\Psi_4(\xi)\mathfrak{u}^{\gamma_2}(\xi)+\Psi_5(\xi)) d\xi\Biggr)^{\frac{1}{\gamma_1}}+\Psi_6(f(\delta))\Biggr\}\nonumber\\
	&~\leq a'(\delta)+ f'(\delta)(\Psi_1(f(\delta))\mathfrak{v}^{\frac{1}{\gamma_1}}(\delta)+\Psi_2(f(\delta)))+ f'(\delta)\Biggl\{\Psi_3(f(\delta))\nonumber\\
	&\qquad\qquad\qquad\Biggl(\mathfrak{v}(\delta)+\int\limits_{0}^{f(\delta)} (\Psi_4(\xi)\mathfrak{v}^{\frac{\gamma_2}{\gamma_1}}(\xi)+\Psi_5(\xi)) d\xi\Biggr)^{\frac{1}{\gamma_1}}+\Psi_6(f(\delta))\Biggr\}\nonumber\\
	&~\leq a'(\delta)+ f'(\delta)(\Psi_1(f(\delta))(\zeta_1~\mathfrak{v}(\delta)+\zeta_2)+\Psi_2(f(\delta)))+f'(\delta)\Big(\Psi_3(f(\delta))\nonumber\\
	&\qquad\qquad\qquad\times(\zeta_1~\mathfrak{w}(\delta)+\zeta_2)+\Psi_6(f(\delta))\Big),
\end{align}
where
\begin{align}\label{4e4}
	\mathfrak{w}(\delta)=\mathfrak{v}(\delta)+\int\limits_{0}^{f(\delta)} (\Psi_4(\xi)\mathfrak{v}^{\frac{\gamma_2}{\gamma_1}}(\xi)+\Psi_5(\xi)) d\xi,\quad \mathfrak{w}(0)=a(0),~\textnormal{and}\quad \mathfrak{v}(\delta)\leq \mathfrak{w}(\delta).
\end{align}
On differentiating $\mathfrak{w}(\delta)$ and using \eqref{4e4}, we find that
\begin{align}\label{4e5}
	\mathfrak{w}'(\delta)&=\mathfrak{v}'(\delta)+f'(\delta) (\Psi_4(f(\delta))\mathfrak{v}^{\frac{\gamma_2}{\gamma_1}}(f(\delta))+\Psi_5(f(\delta)))\nonumber\\
	&~\leq a'(\delta)+ f'(\delta)(\Psi_1(f(\delta))(\zeta_1~\mathfrak{v}(\delta)+\zeta_2)+\Psi_2(f(\delta)))+ f'(\delta)\Big(\Psi_3(f(\delta))(\zeta_1~\mathfrak{w}(\delta)+\zeta_2)\nonumber\\
	&\qquad+\Psi_6(f(\delta))\Big)+f'(\delta) (\Psi_4(f(\delta))(\zeta_3\mathfrak{v}(\delta)+\zeta_4)+\Psi_5(f(\delta)))\nonumber\\
	&~=\Big(\zeta_1~f'(\delta)\Psi_1(f(\delta))+\zeta_1~f'(\delta)\Psi_3(f(\delta))+\zeta_3~f'(\delta)\Psi_4(f(\delta))\Big)\mathfrak{w}(\delta)\nonumber\\
	&\qquad\qquad+\Big(a'(\delta)+\zeta_2~f'(\delta)\Psi_1(f(\delta))+f'(\delta)\Psi_2(f(\delta))+\zeta_2~f'(\delta)\Psi_3(f(\delta))\nonumber\\
	&\qquad\qquad\qquad+f'(\delta)\Psi_6(f(\delta))+\zeta_4~f'(\delta)\Psi_4(f(\delta))+f'(\delta)\Psi_5(f(\delta))\Big).
\end{align}
Integrating inequality \eqref{4e5} from $0$ to $\delta,$ we achieve that
\begin{align}\label{4e6}
\mathfrak{w}(\delta)&\leq a(0) \exp\Biggl(\int\limits_0^{f(\delta)}\zeta_1~\Psi_1(\theta)+\zeta_1~\Psi_3(\theta)+\zeta_3~\Psi_4(\theta)d\theta\Biggr)+\int\limits_0^{f(\delta)} \Big(a'(f^{-1}\xi)+\zeta_2~\Psi_1(\xi)+\Psi_2(\xi)\nonumber\\
&~+\zeta_2~\Psi_3(\xi)+\Psi_6(\xi)+\zeta_4\Psi_4(\xi)+\Psi_5(\xi)\Big)\times \exp\Biggl(\int\limits_\xi^{f(\delta)}\zeta_1~\Psi_1(\theta)+\zeta_1~\Psi_3(\theta)+\zeta_3~\Psi_4(\theta)d\theta\Biggr)d\xi.
\end{align}
Using $\mathfrak{u}^{\gamma_1}(\delta)\leq\mathfrak{v}(\delta)$, \eqref{4e4} and \eqref{4e6}, we find the estimate as stated in \eqref{4e7}.
\end{proof}

\begin{rem} We note that this result reduces to certain recent and well-known integral inequalities under an appropriate set of assumptions as below:
	\begin{enumerate}
		\item If we insert $\Psi_2(\delta)=\Psi_5(\delta)=\Psi_6(\delta)=0$ then this inequality is reduced to Theorem 2.4 \cite{Shakoor2023}.
		\item One can achieve the well-known inequality due to Gronwall and Bellman \cite{Pachpatte1998} from Theorem \ref{t4} if it is assumed that  $a(t)=c$ for some $c\in\mathbb{R}_+, \Psi_2(\delta)=\Psi_3(\delta)=0, \gamma_1=1$ and $f(\delta)=\delta.$
		\item When we set $a(\delta)=c~ (c\in\mathbb{R}_+), \Psi_1(\delta)=0, \Psi_2(\delta)=0, \Psi_5(\delta)=0, \Psi_6(\delta)=0,$ and $f(\delta)=\delta$, the inequality established and proven in Theorem \ref{t4} changes into the inequality shown in Theorem 2.3 \cite{Bainov1992}.
		\item Substituting $\Psi_1=0, \Psi_2(\delta)=0, \Psi_5(\delta)=0, \Psi_6(\delta)=0, a(\delta)=c, f(\delta)=\delta, \gamma_1=\gamma_2=1 $ in the previous inequality yields the same form as the inequality defined in Pachpatte's Theorem \ref{i6}. 
	\end{enumerate}
\end{rem}

\begin{thm}\label{t5}
	Consider $\mathfrak{u}, \Psi_1, \Psi_2, \Psi_3\in \textnormal{Cf}_{\mathbb{R}_+},$ and let $a, f, \Phi\in  \textnormal{Cdf}_{\mathbb{R}_{+}}$ be nondecreasing in nature wherein $a(\delta)\geq 1, \Phi(\delta)\geq 1, f(\delta)\leq \delta (\delta\in \mathbb{R}_+)$ such that
	\begin{align}\label{5e1}
		\mathfrak{u}^{\gamma_1}(\delta)&\leq \Phi(\delta)\Biggl[a(\delta)+\int\limits_{0}^{f(\delta)}\Psi_1(\theta)\mathfrak{u}(\theta) d\theta+\int\limits_{0}^{f(\delta)} \Psi_2(\theta)\Biggl(\mathfrak{u}^{\gamma_1}(\theta)+\int\limits_{0}^{\theta} \Psi_3(\xi)\mathfrak{u}^{\gamma_2}(\xi) d\xi\Biggr)^{\frac{1}{\gamma_2}}d\theta\Biggr]
	\end{align}
	 for $\delta, \gamma_1, \gamma_2\in\mathbb{R}_+$ such that $\gamma_1\geq \gamma_2 \geq 1$ then
\begin{align}\label{5e7}
	\mathfrak{u}(\delta)&\leq \Biggl\{\Phi(0)a(0) \exp\Biggl(\int_0^{f(\delta)}\Big[\Phi'(f^{-1}(\theta))\Phi^{-1}(f^{-1}(\theta))+\zeta_1~\Phi(f^{-1}(\theta))\Psi_1(\theta)+\zeta_9~\Phi(f^{-1}(\theta))\Psi_2(\theta)\nonumber\\
	&~~+\zeta_3~ \Psi_3(\theta)\Big]d\theta\Biggr)+\int_0^{f(\delta)}\Phi(a^{-1}(\xi))a'(a^{-1}(\xi))+\zeta_2~\Phi(a^{-1}(\xi))\Psi_1(\xi)+\zeta_{10}~\Phi(a^{-1}(\xi))\Psi_2(\xi)\nonumber\\
	&~~+\zeta_4~\Psi_3(\xi)\times \Bigg(\exp\Biggl(\int_0^{f(\delta)}\Big[\Phi'(f^{-1}(\theta))\Phi^{-1}(f^{-1}(\theta))+\zeta_1~\Phi(f^{-1}(\theta))\Psi_1(\theta)\nonumber\\
	&~~+\zeta_9~\Phi(f^{-1}(\theta))\Psi_2(\theta)+\zeta_3~ \Psi_3(\theta)\Big]d\theta\Biggr)\Bigg)d\xi\Biggr\}^{\frac{1}{\gamma_1}},
\end{align}
where $\zeta_1, \zeta_2, \zeta_3, \zeta_4$ are as in Theorem \ref{t3} and $\zeta_9=\frac{1}{\gamma_2}\kappa^{\frac{1-\gamma_2}{\gamma_2}},  \zeta_{10}=\frac{\gamma_2-1}{\gamma_2}\kappa^{\frac{1}{\gamma_2}} (\kappa>0)$.
\end{thm}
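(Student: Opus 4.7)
The plan is to emulate the pattern established in Theorems \ref{t3} and \ref{t4}, with the multiplicative factor $\Phi(\delta)$ absorbed into the auxiliary variables. Let $\mathfrak{v}(\delta)$ denote the bracketed expression on the right-hand side of \eqref{5e1}, so that $\mathfrak{u}^{\gamma_1}(\delta) \le \Phi(\delta)\mathfrak{v}(\delta)$ and $\mathfrak{v}(0) = a(0)$. Two applications of Lemma \ref{i3}, at exponents $1/\gamma_1$ and $\gamma_2/\gamma_1$ respectively, then yield the pointwise linearisations
\[ \mathfrak{u}(\delta) \le \zeta_1 \Phi(\delta)\mathfrak{v}(\delta) + \zeta_2, \qquad \mathfrak{u}^{\gamma_2}(\xi) \le \zeta_3 \Phi(\xi)\mathfrak{v}(\xi) + \zeta_4, \]
with $\zeta_1,\zeta_2,\zeta_3,\zeta_4$ as already introduced in Theorem \ref{t3}.

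Next I would differentiate $\mathfrak{v}$ and substitute these bounds. To handle the $1/\gamma_2$-power integrand that remains inside the $\Psi_2$ term, introduce
\[ \mathfrak{w}(\delta) = \Phi(\delta)\mathfrak{v}(\delta) + \int_0^{f(\delta)} \Psi_3(\xi)\bigl(\zeta_3 \Phi(\xi)\mathfrak{v}(\xi) + \zeta_4\bigr)\, d\xi, \]
so that $\mathfrak{w}(0) = \Phi(0)a(0)$, $\Phi(\delta)\mathfrak{v}(\delta) \le \mathfrak{w}(\delta)$, and a third application of Lemma \ref{i3} gives $\mathfrak{w}^{1/\gamma_2}(\delta) \le \zeta_9 \mathfrak{w}(\delta) + \zeta_{10}$. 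Differentiating $\mathfrak{w}$ via the product rule on $\Phi\mathfrak{v}$ together with the fundamental theorem of calculus applied to the retarded integral, and then dividing by $\mathfrak{w}(\delta)$ (which is $\ge 1$ thanks to $a,\Phi \ge 1$) where needed to tame a surviving factor of $\mathfrak{w}^{-1}$, produces a first-order linear differential inequality of the form $\mathfrak{w}'(\delta) \le A(\delta)\mathfrak{w}(\delta) + B(\delta)$, where $A$ and $B$ are explicit combinations of $a'$, $\Phi$, $\Phi'$, $\Phi^{-1}$, $\Psi_1$, $\Psi_2$, $\Psi_3$, and $f'$.

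Integrating this linear inequality on $[0,\delta]$ by the standard integrating-factor formula, and then performing the change of variable $\theta = f(s)$ to rewrite each retarded integral as one over $[0,f(\delta)]$ against $f^{-1}$-evaluated weights, produces the double-exponential expression in \eqref{5e7}. The final inversion $\mathfrak{u}(\delta) \le \bigl(\Phi(\delta)\mathfrak{v}(\delta)\bigr)^{1/\gamma_1} \le \mathfrak{w}^{1/\gamma_1}(\delta)$ supplies the outer $1/\gamma_1$-power. The main obstacle is not conceptual but bookkeeping: the presence of $\Phi(\delta)$ both multiplying $\mathfrak{v}$ and weighting the integrand inside $\mathfrak{w}$ roughly doubles the product-rule terms, and the quotient $\Phi'/\Phi$ surviving in the exponential factor of \eqref{5e7} arises precisely when one exploits $\mathfrak{w}\ge 1$ to absorb an $\mathfrak{w}^{-1}$ coming from the product-rule step; so the hypotheses $a(\delta)\ge 1$ and $\Phi(\delta)\ge 1$ must be invoked at exactly the right moment.
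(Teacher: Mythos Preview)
Your plan is correct and matches the paper's argument almost exactly; the only cosmetic difference is that the paper takes $\mathfrak{v}(\delta)$ to be the \emph{entire} right-hand side of \eqref{5e1} (so $\mathfrak{v}(0)=\Phi(0)a(0)$) and then sets $\mathfrak{w}(\delta)=\mathfrak{v}(\delta)+\int_0^{f(\delta)}\Psi_3(\xi)\mathfrak{v}^{\gamma_2/\gamma_1}(\xi)\,d\xi$, whereas you absorb $\Phi$ one step later into $\mathfrak{w}$. One small correction: the coefficient $\Phi'\Phi^{-1}$ does not come from any division by $\mathfrak{w}$ or from invoking $\mathfrak{w}\ge 1$; it drops out algebraically from $\Phi'(\delta)\mathfrak{v}(\delta)=\Phi'(\delta)\Phi^{-1}(\delta)\cdot\Phi(\delta)\mathfrak{v}(\delta)\le \Phi'(\delta)\Phi^{-1}(\delta)\mathfrak{w}(\delta)$, so the hypotheses $a,\Phi\ge 1$ are not actually used at that step.
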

\begin{proof}
	We begin by substituting right-hand-side of \eqref{5e1} with the function $\mathfrak{v}(\delta)$. It brings us to the conclusions that $\mathfrak{u}^{\gamma_1}(\delta)\leq \mathfrak{v}(\delta)$ and $\mathfrak{v}(0)=\Phi(0)a(0)$. Further derivative of $\mathfrak{v}(\delta),$ and lemmas \ref{i2} and \ref{i3} directs us to
\begin{align}\label{5e3}
\mathfrak{v}'(\delta)&=\Phi'(\delta)\Biggl[a(\delta)+\int\limits_{0}^{f(\delta)}\Psi_1(\theta)\mathfrak{u}(\theta) d\theta+\int\limits_{0}^{f(\delta)} \Psi_2(\theta)\Biggl(\mathfrak{u}^{\gamma_1}(\theta)+\int\limits_{0}^{\theta} \Psi_3(\xi)\mathfrak{u}^{\gamma_2}(\xi) d\xi\Biggr)^{\frac{1}{\gamma_2}}d\theta\Biggr]\nonumber\\
&~+\Phi(\delta)\Biggl[a'(\delta)+f'(\delta)\Psi_1(f(\delta))\mathfrak{u}(f(\delta))+f'(\delta) \Psi_2(f(\delta))\Biggl(\mathfrak{u}^{\gamma_1}(f(\delta))+\int\limits_{0}^{f(\delta)} \Psi_3(\xi)\mathfrak{u}^{\gamma_2}(\xi) d\xi\Biggr)^{\frac{1}{\gamma_2}}\Biggr]\nonumber\\
&\leq \Phi'(\delta)\Phi^{-1}(\delta)\mathfrak{v}(\delta)+\Phi(\delta)\Biggl(a'(\delta)+f'(\delta)\Psi_1(f(\delta))\mathfrak{v}^{\frac{1}{\gamma_1}}(\delta)+f'(\delta) \Psi_2(f(\delta))\mathfrak{w}^{\frac{1}{\gamma_2}}(\delta)\Biggr),\nonumber\\
&\qquad\qquad\textnormal{where}~~\mathfrak{w}(\delta)=\mathfrak{v}(\delta)+\int\limits_{0}^{f(\delta)} \Psi_3(\xi)\mathfrak{v}^{\frac{\gamma_2}{\gamma_1}}(\xi) d\xi\nonumber\\
&\leq \Phi'(\delta)\Phi^{-1}(\delta)\mathfrak{v}(\delta)+\Phi(\delta)a'(\delta)+\Phi(\delta)f'(\delta)\Psi_1(f(\delta))(\zeta_1~\mathfrak{v}(\delta)+\zeta_2)\nonumber\\
&\qquad+\Phi(\delta)f'(\delta) \Psi_2(f(\delta))(\zeta_9~\mathfrak{w}(\delta)+\zeta_{10}).
\end{align}
Since $\mathfrak{v}(\delta)\leq \mathfrak{w}(\delta)$ and $\mathfrak{w}(0)=\Phi(0)a(0),$ from \eqref{5e3}, we have
\begin{align}\label{5e5}
	\mathfrak{w}'(\delta)&=\mathfrak{v}'(\delta)+f'(\delta) \Psi_3(f(\delta))\mathfrak{v}^{\frac{\gamma_2}{\gamma_1}}(f(\delta))\nonumber\\
	&~\leq \Phi'(\delta)\Phi^{-1}(\delta)\mathfrak{w}(\delta)+\Phi(\delta)a'(\delta)+\Phi(\delta)f'(\delta)\Psi_1(f(\delta))(\zeta_1~\mathfrak{w}(\delta)+\zeta_2)\nonumber\\
	&\qquad+\Phi(\delta)f'(\delta) \Psi_2(f(\delta))(\zeta_9~\mathfrak{w}(\delta)+\zeta_{10})+f'(\delta) \Psi_3(f(\delta))(\zeta_3~\mathfrak{w}(\delta)+\zeta_4)\nonumber\\
	&~=\Big(\Phi'(\delta)\Phi^{-1}(\delta)+\zeta_1~\Phi(\delta)f'(\delta)\Psi_1(f(\delta))+\zeta_9~\Phi(\delta)f'(\delta)\Psi_2(f(\delta))+\zeta_3~f'(\delta) \Psi_3(f(\delta))\Big)\mathfrak{w}(\delta)\nonumber\\
	&~+\Big(\Phi(\delta)a'(\delta)+\zeta_2~\Phi(\delta)f'(\delta)\Psi_1(f(\delta))+\zeta_{10}~\Phi(\delta)f'(\delta)\Psi_2(f(\delta))+\zeta_4~f'(\delta) \Psi_3(f(\delta))\Big)
\end{align}
Integrating inequality \eqref{5e5} from $0$ to $\delta,$ we find that
\begin{align}\label{5e6}
	\mathfrak{w}(\delta)&\leq \Phi(0)a(0) \exp\Biggl(\int_0^{f(\delta)}\Big[\Phi'(f^{-1}(\theta))\Phi^{-1}(f^{-1}(\theta))+\zeta_1~\Phi(f^{-1}(\theta))\Psi_1(\theta)+\zeta_9~\Phi(f^{-1}(\theta))\Psi_2(\theta)\nonumber\\
	&~~+\zeta_3~ \Psi_3(\theta)\Big]d\theta\Biggr)+\int_0^{f(\delta)}\Phi(a^{-1}(\xi))a'(a^{-1}(\xi))+\zeta_2~\Phi(a^{-1}(\xi))\Psi_1(\xi)+\zeta_{10}~\Phi(a^{-1}(\xi))\Psi_2(\xi)\nonumber\\
	&~~+\zeta_4~\Psi_3(\xi)\times \Bigg(\exp\Biggl(\int_0^{f(\delta)}\Big[\Phi'(f^{-1}(\theta))\Phi^{-1}(f^{-1}(\theta))+\zeta_1~\Phi(f^{-1}(\theta))\Psi_1(\theta)\nonumber\\
	&~~+\zeta_9~\Phi(f^{-1}(\theta))\Psi_2(\theta)+\zeta_3~ \Psi_3(\theta)\Big]d\theta\Biggr)\Bigg)d\xi.
\end{align}
Thus, using $\mathfrak{u}^{\gamma_1}(\delta)\leq \mathfrak{v}(\delta)\leq \mathfrak{w}(\delta),$ we arrive at the bound as stated in \eqref{5e7}. 
\end{proof}

\begin{rem}Under a suitable set of assumptions, as listed below, we remark that this result simplifies to a few current and well-known integral inequalities.
	\begin{enumerate}
		\item If we set $\Phi(\delta)=1, \delta\in\mathbb{R}_+,$ then above inequality takes the form of inequality due to A Shakoor et al. \cite{Shakoor2023}.
		\item Theorem \ref{t5} is simplified to Gronwall-Bellman inequality \cite{Pachpatte1998} by taking into assumptions that  $\Phi(\delta)=1~ (\delta\in\mathbb{R}_+), a(\delta)=c~ (c\in\mathbb{R}_+),  \Psi_2(\delta)= 0, f(\delta)=\delta,$ and  $\gamma_1=1$.
	\item In particular, Theorem \ref{t5} results into Theorem 2.3 \cite{Bainov1992}, when we choose $\Phi(\delta)=1,$ for $\delta\in\mathbb{R}_+,$ $a(\delta)=c~ (c\in\mathbb{R}_+), \Psi_1(\delta)= 0,$ and $f(\delta)=\delta$.
	\item If we specify the following functions and parameters: $\Phi(\delta)=1$, $a(\delta)=c$, $\Psi_1(\delta)=0$, $f(\delta)=\delta$, and setting $\gamma_1$ and $\gamma_2$ both to 1, then the inequality proven in Theorem \ref{t5} simplifies to Pachpatte's inequality as noted in Theorem \ref{i6}.
	\end{enumerate}

\end{rem}

\section{\textbf{Applications}}
\begin{example}
	\setlength{\belowdisplayskip}{0pt} \setlength{\belowdisplayshortskip}{0pt}
	\setlength{\abovedisplayskip}{0pt} \setlength{\abovedisplayshortskip}{0pt}
Assume an integral equation with nonlinear retardation as
\begin{align}\label{a1e1}
	\mathfrak{u}^5(\delta)\leq \left(\delta+\int\limits_{0}^{\sqrt{\delta}} 2\mathfrak{u}(\theta) d\theta+\int\limits_{0}^{\sqrt{\delta}} 3\left(\mathfrak{u}^{4}(\theta)+\int\limits_{0}^{\theta} \xi\mathfrak{u}^{3}(\xi) d\xi\right)^{\frac{1}{4}}d\theta\right)^{3}.
\end{align}
We can observe that the unknown function $\mathfrak{u}(\delta)$ in \eqref{a1e1} is as stated in Theorem \ref{t1}, thus utilizing Theorem \ref{t1},
\begin{align}\label{a1e2}
	\mathfrak{u}(\delta)&\leq (\zeta_7+\zeta_8\delta)(0) \exp\left(\int_0^{\sqrt{\delta}} 6\zeta_8 d\theta\right)+\int_0^{\sqrt{\delta}} \frac{\xi}{4}\exp\left(\int_0^{\sqrt{\delta}} 6\zeta_8~d\theta\right)d\xi,
\end{align}
where $\zeta_7=\frac{2}{5}\kappa^{\frac{3}{5}}$ and $\zeta_8=\frac{3}{5}\kappa^{\frac{-2}{5}},$ for any $\kappa>0.$ If we let $\kappa=1,$ then
\begin{align}\label{a1e3}
	\mathfrak{u}(\delta)&\leq \Big(\frac{2}{5}+\frac{3}{5}\delta\Big)(0) \exp\left(\int_0^{\sqrt{\delta}} \frac{18}{5} d\theta\right)+\int_0^{\sqrt{\delta}} \frac{\xi}{4}\exp\left(\int_0^{\sqrt{\delta}} \frac{18}{5}~d\theta\right)d\xi\nonumber\\
	&~=\frac{2}{5} \exp\left( \frac{18\sqrt{\delta}}{5} \right)+\int_0^{\sqrt{\delta}} \frac{\xi}{4}\exp\left( \frac{18(-\xi+\sqrt{\delta})}{5}\right)d\xi\nonumber\\
	&~=\frac{2}{5} \exp\left( \frac{18\sqrt{\delta}}{5} \right)+\frac{5 \left(-18 \sqrt{t}+5 e^{\frac{18 \sqrt{t}}{5}}-5\right)}{1296}.
\end{align}
We notice that blow-up does not occur at any point $\delta\in\mathbb{R}_+$, indicating that the solution of \eqref{a1e1} is globally defined.
\end{example}

\begin{example}
	\setlength{\belowdisplayskip}{0pt} \setlength{\belowdisplayshortskip}{0pt}
	\setlength{\abovedisplayskip}{0pt} \setlength{\abovedisplayshortskip}{0pt}
	Assume an integral equation with nonlinear retardation as
		\begin{align}\label{a2e1}
		\mathfrak{u}^3(\delta)&\leq 1+2\delta+\int\limits_{0}^{\delta^{\frac{1}{3}}} (2\mathfrak{u}(\theta)+\theta) d\theta+\int\limits_{0}^{\delta^{\frac{1}{3}}} \Biggl\{5\Biggl(\mathfrak{u}^{3}(\theta)+\int\limits_{0}^{\theta} (7\mathfrak{u}^{2}(\xi)+\xi) d\xi\Biggr)^{\frac{1}{3}}+\theta\Biggr\}d\theta.
	\end{align}
	We notice that the definition of the function $\mathfrak{u}(\delta)$ in equation \eqref{a2e1} is in accordance with Theorem \ref{t4}. The value of $\mathfrak{u}(\delta)$ is thus clearly estimated by applying Theorem \ref{t4} to equation \eqref{a2e1} , and can be represented as follows:
	\begin{align}\label{a2e2}
	\mathfrak{u}(\delta)&\leq \Biggl\{ \exp\Biggl(\int_0^{\delta^{\frac{1}{3}}}(2~\zeta_1+5~\zeta_1+7~\zeta_3) d\theta\Biggr)+\int_0^{\delta^{\frac{1}{3}}} \Big(2+2~\zeta_2+3\xi+5~\zeta_2+7~\zeta_4\Big)\nonumber\\
	&\qquad\times \exp\Biggl(\int_\xi^{\delta^{\frac{1}{3}}}(2~\zeta_1+5~\zeta_1+7~\zeta_3)d\theta\Biggr)d\xi\Biggr\}^{\frac{1}{3}},
\end{align}
where $\zeta_1=\frac{1}{3}\kappa^{\frac{-2}{3}},  \zeta_2=\frac{2}{3}\kappa^{\frac{1}{3}},  \zeta_3=\frac{2}{3}\kappa^{\frac{-1}{3}},  \zeta_4=\frac{1}{3}\kappa^{\frac{2}{3}}$, for any $\kappa>0$. If we set $\kappa=1,$ then we find that
	\begin{align}\label{a2e3}
	\mathfrak{u}(\delta)&\leq \Biggl\{ \exp\Biggl(\int_0^{\delta^{\frac{1}{3}}} 7~ d\theta\Biggr)+\int_0^{\delta^{\frac{1}{3}}} \Big(9+3\xi\Big)\times \exp\Biggl(\int_\xi^{\delta^{\frac{1}{3}}} 7~d\theta\Biggr)d\xi\Biggr\}^{\frac{1}{3}}\nonumber\\
	&~=\Biggl\{ \exp\Biggl(7 \sqrt[3]{\delta}\Biggr)+\int_0^{\delta^{\frac{1}{3}}} \Big(9+3\xi\Big)\times \exp\Biggl(7 (\sqrt[3]{\delta}-\xi)\Biggr)d\xi\Biggr\}^{\frac{1}{3}} \nonumber\\
	&~=\Biggl\{ \exp\Biggl(7 \sqrt[3]{t}\Biggr)+\frac{3}{49} \left(-7 \sqrt[3]{\delta}+22 e^{7 \sqrt[3]{\delta}}-22\right)\Biggr\}^{\frac{1}{3}}.
\end{align}
This plot representation of explicit bound on $\mathfrak{u}(\delta)$ to analyze the bound for blow-up is 
\begin{center}
\includegraphics[scale=0.8]{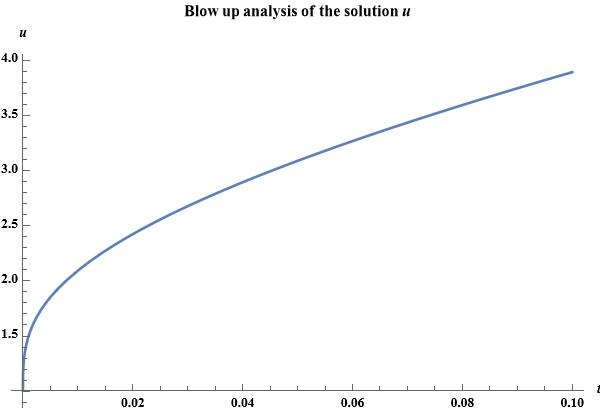}
\end{center}
This plot indicates that the solution does not blow up for any $\delta\in\mathbb{R}_+,$ hence the solution of the equation \eqref{a2e1} is globally defined.
\end{example}
\setlength{\belowdisplayskip}{0pt} \setlength{\belowdisplayshortskip}{0pt}
\setlength{\abovedisplayskip}{0pt} \setlength{\abovedisplayshortskip}{0pt}
\section{\textbf{Conclusions}}
Some novel nonlinear integral and integro-differential inequalities of Gronwall-Bellman-Pachpatte kind are investigated in this work. It demonstrates how a variety of well-known inequalities from both the current literature and the most recent research can be attained by careful choice of parameters. The manuscript then uses the introduced integral inequalities to investigate the existence, uniqueness, stability, boundedness, and asymptotic behavior of solutions to more complicated nonlinear differential and integral equations. Additional important integral and integro-differential problems can be tackled with the help of generalized versions of useful integral inequalities provided by this study.


\begin{thebibliography}{99}
\bibitem{Agarwal2005}Agarwal, R., Deng, S. and Zhang, W. Generalization of a retarded Gronwall-like inequality and its applications. {\em Applied Mathematics And Computation}. \textbf{165}, 6, 599-612 (2005)
\bibitem{Shakoor2023}Shakoor, A., Wali, S., Saleem, M., Samar, M. and Saleem, M. Retarded Nonlinear Integral Inequalities of Gronwall-Bellman-Pachpatte Type And Their Applications. {\em Honam Mathematical J}. \textbf{45}, 54-70 (2023), https://doi.org/10.5831/HMJ.2023.45.1.54
\bibitem{Kendre2020}Kendre, S. and Kale, N. On some new nonlinear volterra-fredholm type discrete inequalities and its applications. {\em Journal of Mathematical Inequalities}. \textbf{14}, 6, 437-453 (2020)
\bibitem{Kendre2021}Kendre, S. and Kale, N. On Nonlinear Volterra–Fredholm Type Discrete Fractional Sum Inequalities. {\em Fractional Differential Calculus}. \textbf{11}, 6, 17-33 (2021)
\bibitem{Pachpatte1998}Pachpatte, B. Inequalities for Differential and Integral Equations. (Academic Press,1998)
\bibitem{Bainov1992}Bainov, D. and Simeonov, P. Integral Inequalities and Applications. (Kluwer Academic,1992)
\bibitem{zhao}Zhao, C. Some integral inequalities for differential equations. {\em J. Binzhou Teachers College}. \textbf{17}, 41-46 (2001)
\bibitem{Pachpatte2006}Pachpatte, B. Integral and Finite Difference Inequalities and Applications. (North-Holland Mathematical Studies,2006)
\bibitem{Abdeldaim2015}Abdeldaim, A. and El-Deeb, A. On generalized of certain retarded nonlinear integral inequalities and its applications in retarded integro-differential equations. {\em Applied Mathematics And Computation}. \textbf{256}, 4, pp. 375-380 (2015)
\bibitem{Kale2021}Kendre, S. and Kale, N. Some generalizations of linear finite difference inequalities of Pachpatte type. {\em Asian-European Journal of Mathematics}. \textbf{14}, 3, (2021)
\bibitem{Abdeldaim2011}Abdeldaim, A. and Yakout, M. On some new integral inequalities of Gronwall-Bellman-Pachpatte type. {\em Applied Mathematics And Computation}. \textbf{217}, 6, pp. 7887-7899 (2011)
\bibitem{Gronwall1919}Gronwall, T. Note on the Derivatives with Respect to a Parameter of the Solutions of a System of. {\em Source: Annals of Mathematics, Second Series}. \textbf{20} pp. 292-296 (1919)
\bibitem{Pachpatte1973}Pachpatte, B. A Note on Gronwali-Bellman Inequality. {\em Journal of Mathematical Analysis And Applications}. \textbf{44} pp. 758-762 (1973)
\bibitem{Shakoor2019}A. Shakoor and Iqbal, M. Further Nonlinear Retarded Integral Inequalities for Gronwall-Bellman Type and Their Applications. {\em Iran J Sci Technol Trans Sci.}. \textbf{43} pp. 2559-2568 (2019)
\bibitem{Shakoor2020}A. Shakoor and Rehman, A. Some Generalizations of Retarded Nonlinear Integral Inequalities and Its Applications. {\em J. Math. Inequal.}. \textbf{14} pp. 1223-1235 (2020)
\bibitem{Wang2012}Wang, W. Some retarded nonlinear integral inequalities and their applications in retarded differential equations. {\em J. Inequal. Appl.}. \textbf{2012(75)} (2012)
\end{thebibliography}

\end{document}